\pdfoutput=1
\pdfoutput=1
\pdfoutput=1
\pdfoutput=1
\pdfoutput=1
\documentclass[a4paper,12pt]{article}
\usepackage{calc}
\usepackage[all]{xy}
\usepackage[centertags]{amsmath}
\usepackage{latexsym}
\usepackage{amsfonts}
\usepackage{graphicx}
\usepackage{tikz}
\usepackage{cases}
\usepackage{amssymb}
\usepackage{amsthm}
\usepackage{color}
\usepackage{fancyhdr}
\usepackage[dvips]{epsfig}
\usepackage{newlfont}
\usepackage[latin1]{inputenc}
\usepackage[latin1]{inputenc}
\usepackage[english,french]{babel}
\usepackage{graphicx}
\usepackage{t1enc}
\usepackage[english,french]{babel}
\usepackage{fancybox}
\usepackage{graphicx}
\usepackage{t1enc}
\usepackage[french2]{minitoc}
\usepackage{mathrsfs}
\usepackage{amsfonts}
\usepackage{wasysym}
\usepackage{hyperref}
\allowdisplaybreaks
\pagestyle{fancy}
\usepackage{float}

\fancyhf{} \fancyfoot[C]{\bfseries\thepage}
\fancyhead[LO]{\tiny\bfseries\rightmark}
\fancyhead[LE]{\tiny\bfseries\leftmark}\fancyhead[R]{\bfseries
	\tiny Tangue NDAWA. Bertuel} \fancyfoot[L]{\tiny \textsf{ }}

\addtolength{\headheight}{1.pt}

\fancypagestyle{plain}{ \fancyhead{}

	\rhead{\textbf{}} \rfoot{\footnotesize{\textsf{\tiny }}}
	\cfoot{\footnotesize{\textbf{ \thepage}}}} \hfuzz2pt
\newlength{\defbaselineskip}
\setlength{\defbaselineskip}{\baselineskip}

\numberwithin{equation}{section} 
\newtheorem{theorem}{Theorem}[section]
\newtheorem{corollary}[theorem]{Corollary}

\newtheorem{lemma}[theorem]{Lemma}
\newtheorem{proposition}[theorem]{Proposition}

\newtheorem{remark}{Remark}[section]

\newcommand{\Z}{{\mathbb Z}}

 \makeatletter
\newcommand{\thechapterwords}
{ \ifcase \thechapter\or 1\or 2\or 3\or 4\or 5\or
	6\or 7\or 8\or 9\or 10\or 11\fi}
\def\thickhrulefill{\leavevmode \leaders \hrule height 2ex \hfill \kern \z@}
\def\@makechapterhead#1{%
	\vspace*{15\p@}%
	{\parindent \z@ \centering \reset@font
		\thickhrulefill\quad
		\scshape  {\chapnumfont \@chapapp{}}{\chapnumfont \thechapterwords}
		\quad \thickhrulefill
		\par\nobreak
		\vspace*{15\p@}%
		\interlinepenalty\@M
		\hrule
		\vspace*{15\p@}%
		\huge {\bfseries  #1}\par\nobreak
		\par
		\vspace*{15\p@}%
		\hrule
		\vskip 15\p@
}}
\def\@makeschapterhead#1{%
	\vspace*{15\p@}%
	{\parindent \z@ \centering \reset@font
		\thickhrulefill
		\par\nobreak
		\vspace*{15\p@}%
		\interlinepenalty\@M
		\hrule
		\vspace*{15\p@}%
		\Huge \bfseries #1\par\nobreak
		\par
		\vspace*{15\p@}%
		\hrule
		\vskip 30\p@
}}

\DeclareFixedFont{\chapnumfont}{T1}{phv}{b}{n}{20pt}
\DeclareFixedFont{\chapchapfont}{T1}{phv}{b}{n}{16pt}
\DeclareFixedFont{\chaptitfont}{T1}{phv}{b}{n}{24.88pt}
\def\@makechapterhead#1{%
	\vspace*{15\p@}%
	{\parindent \z@ \centering \reset@font
		\thickhrulefill\quad
		\scshape {\chaptitfont\color[rgb]{0.00,0.50,1.00}\@chapapp{}}
		{\chapnumfont \thechapterwords}
		\quad \thickhrulefill
		\par\nobreak
		\vspace*{15\p@}%
		\interlinepenalty\@M
		\hrule
		\vspace*{15\p@}%
		{\Large\bfseries #1}\par\nobreak
		\par
		\vspace*{15\p@}%
		\hrule
		\vskip 30\p@
}}%


\begin{document}
	\title{Infinite Lifting of an  Action of Symplectomorphism Group on the set of  bi-Lagrangian structures}
	\date{ }
	\author{ \large{Bertuel  TANGUE NDAWA }\\
		\normalsize{University Institute of Technology of }\\
		\normalsize{University of Ngaoundere}\\
		\normalsize{Box: 455, Ngaoundere (Cameroon)}\\ \normalsize{bertuelt@yahoo.fr}
		\vspace{0.5cm}\\\today}
	\maketitle
	
	\selectlanguage{english}
	\begin{center}
		To Kira and Thierry Rothen's family.	
	\end{center}
	\section*{Abstract}
	We consider a smooth $2n$-manifold $M$ endowed with a bi-Lagrangian structure $(\omega,\mathcal{F}_{1},\mathcal{F}_{2})$. That is,  $\omega$ is a symplectic form and  $(\mathcal{F}_{1},\mathcal{F}_{2})$  is a pair of transversal Lagrangian foliations on $(M, \omega)$. Such structures have an important geometric object called the Hess Connection.  Among the many importance of these connections, they allow to classify affine bi-Lagrangian structures.
	
	In this work, we show that a bi-Lagrangian structure on $M$ can be  lifted as  a bi-Lagrangian structure on its trivial bundle $M\times\mathbb{R}^n$. Moreover, the lifting of an affine bi-Lagrangian structure is also an affine bi-Lagrangian structure.  We define a dynamic on the symplectomorphism group and the set of bi-Lagrangian structures  (that is an action of the symplectomorphism group on the set of bi-Lagrangian structures). This dynamic is compatible with Hess connections, preserves  affine bi-Lagrangian structures, and can be lifted on $M\times\mathbb{R}^n$. This lifting can be lifted again  on $\left(M\times\mathbb{R}^{2n}\right)\times\mathbb{R}^{4n}$, and  coincides with the initial dynamic (in our sense) on $M\times\mathbb{R}^n$ for some bi-Lagrangian structures.  Results  still hold by replacing $M\times\mathbb{R}^{2n}$ with the tangent bundle $TM$ of $M$ or its cotangent bundle $T^{*}M$ for some manifolds $M$. 
	
	\textbf{Keywords}: Symplectic, Symplectomorphism, Bi-Lagrangian, Para-K\"{a}hler, Hess connection.
	
	\textbf{MSC2010}: 53D05, 53D12.
	
	\textbf{Acknowledgment}:
	
	The author would like to thank the members of the Geometry and High Energy Physics research group of the University of Douala who participated in the presentations on the subject; in particular, Prof. NGAKEU Ferdinand who had proposed this subject to us for a master's thesis.

	\section{Introduction}
	Let $(M,\omega)$ be  a symplectic manifold. This means,  $\omega$ is a symplectic form on $M$  (that is, $\omega$ is a 2-form  which is closed (the exterior differential (derivative) $d\omega$ vanishes)	 and nondegenerate as a bilinear form on  the set of vector fields on $M$ denoted   $\mathfrak{X}(M)$), see \cite{dasilva, paul}. A bi-Lagrangian structure on  $(M,\omega)$ is a pair $(\mathcal{F}_{1},\mathcal{F}_{2})$ of transversal Lagrangian foliations; or, a bi-Lagrangian structure on $M$ is a triplet $(\omega,\mathcal{F}_{1},\mathcal{F}_{2})$ where $(\mathcal{F}_{1},\mathcal{F}_{2})$ is a pair of transversal Lagrangian foliations on the symplectic manifold $(M,\omega)$,  see \cite{2, 11, 3, FR1, FR2, 1, 7,  FE}. In  both cases,  $(M,\omega,\mathcal{F}_{1},\mathcal{F}_{2})$ is called a bi-Lagrangian manifold. Some details on  Lagrangian foliations are given in \S \ref{sub1}.
	
	Let $(M,\omega,\mathcal{F}_{1},\mathcal{F}_{2})$  be a bi-Lagrangian  manifold. The Hess connection associated to $(M,\omega,\mathcal{F}_{1},\mathcal{F}_{2})$ is the  symplectic connection $\nabla$ (that is, $\nabla$ is a torsion-free connection parallelizing  $\omega$) which preserves the foliations, see  \cite{2, 11, 3, FR1, FR2, 1, 7,  FE}. The existence and uniqueness of such a connection have been proved  in \cite{7}, and it has been highlighted in \cite{2, 11, 3}. Hess connections are particular cases of Bott connections (which are linear connections preserving the foliations, see \cite{vai, Wei}). Bott connections are greatly used in the theory of the geometric quantization of real polarization (see \cite{EF} for example). Let us mention that  a bi-Lagrangian structure $(\omega,\mathcal{F}_{1},\mathcal{F}_{2})$   on a manifold $M$ corresponds one to one to a para-K\"{a}hler structure $(G,F)$  on $M$ (that is, $G$ is a pseudo-Riemannian metric  on $M$ and $F$ is a para-complex structure  on $M$ which permutes with $G$ in the following sens: $G(F(\cdot),F(\cdot))=-G(\cdot,\cdot)$). The three tensors $\omega$, $G$ and $F$  are connected by the relation:  $\omega(\cdot,\cdot)=G(F(\cdot),\cdot)$, see \cite{FR1, FR2, 1, FE}. Moreover, the Levi-Civita connection of $G$ is the Hess connection of $(M, \omega,\mathcal{F}_{1},\mathcal{F}_{2})$, see \cite{FR1, FR2, 1, FE}.
	Therefore, bi-Lagrangian manifolds are at the interface of symplectic, semi-Riemannian and almost product (para-complex) manifolds. They are the areas of geometric quantization (see \cite{7}) and of  Koszul-Vinberg Cohomology (see \cite{GB1}).
	
	Before we can explain more precisely and prove our results, it is necessary to present some definitions, fix some notations   and formulate some known results we need.
	
	\subsection{Definitions and notations  }\label{sub1}
	We assume that all the objects are smooth throughout this paper.
	
	Let $M$ be an $m$-manifold. By a $p$-dimensional, class $C^r$, $0\leq r\leq \infty$ foliation $\mathcal{F}$ of $M$ we mean a decomposition of $M$ into a union of disjoint connected subsets $\{\mathcal{F}_x\}_{x\in M}$, called the leaves of the foliation, with the following property: every point $y$ in $M$ has a neighborhood $U$ and a system of local, class $C^r$ coordinates  $(y^1,\dots, y^m):U\longrightarrow\mathbb{R}^m$ such that for each leaf $\mathcal{F}_x$ the components of $U\cap\mathcal{F}_x$  are described by the equations $y^{p+1}\mbox{=constant},\dots, y^m=\mbox{constant}$, see \cite{law}.
	
	The expressions $T\mathcal{F}\subset TM$ and $\Gamma\left(T\mathcal{F}\right)$ (or $\Gamma\left(\mathcal{F}\right)$)  denote the tangent bundle to $\mathcal{F}$ and the set of sections of $T\mathcal{F}$  respectively.
	
	Let $\psi : M\longrightarrow N$ be a diffeomorphism. The  push forward  $\psi_*\mathcal{F}=\{\psi_*\mathcal{F}_x\}_{x\in M}$  of $\mathcal{F}$ by $\psi$ is a foliation, and
	\begin{equation}
	\Gamma\left(\psi_*\mathcal{F}\right):=\{\psi_*X,\; X \in\Gamma\left(\mathcal{F}\right)\}=\psi_*\Gamma\left(\mathcal{F}\right).\label{Bieq2}
	\end{equation}
	
	For every $k$-manifold $M'$, the  set $M'\times\mathbb{R}^k$ is called the trivial bundle of $M'$.
	We say that a manifold  is parallelizable when its tangent bundle is diffeomorphic to its trivial bundle. We denote by $\mathcal{M}^{\pi}$ the set of parallelizable manifolds.  Note that:   every Lie group belongs to $\mathcal{M}^{\pi}$; if $M'$ is a $k$-manifold
	which can be covered by a single  chart, then  $M'\in \mathcal{M}^{\pi}$; every  connected 1-manifold is an element of  $\mathcal{M}^{\pi}$; since  the tangent bundle of the product of two manifolds and the product of  their tangent bundles  are diffeomorphic, then  the product of two manifolds in $\mathcal{M}^{\pi}$ also belongs to $\mathcal{M}^{\pi}$.
	
	If the manifold $M$ is endowed with a symplectic form $\omega$ (as a consequence, $m=2n$), a foliation $\mathcal{F}$ is Lagrangian if for every $X\in\Gamma\left(\mathcal{F}\right)$, $\omega(X,Y)=0$ if and only if $Y\in\Gamma\left(\mathcal{F}\right).$
	That is, the orthogonal section
	$$\Gamma\left(\mathcal{F}\right)^{\perp}=\left\{Y\in \mathfrak{X}(M):\; \omega(X,Y)=0, \;X\in \Gamma\left(\mathcal{F}\right)\right\}$$
	of  $\Gamma\left(\mathcal{F}\right)$
	is equal to $\Gamma\left(\mathcal{F}\right)$. A bi-Lagrangian structure on  $M$ consists on a pair $(\mathcal{F}_{1},\mathcal{F}_{2})$ of transversal Lagrangian foliations together with a symplectic form $\omega$. As a consequence, $TM=T\mathcal{F}_{1}\oplus T\mathcal{F}_{2}$. We denote  by $\mathcal B_l(M)$ the set of   bi-Lagrangian  structures on $M$.
	
	Let $(\mathcal{F}_{1},\mathcal{F}_{2})$ be a bi-Lagrangian structure on a symplectic $2n$-manifold
	$(M,\omega)$. Every point in $M$ has an open neighborhood $U$ which is the domain of a chart whose local coordinates $(p^1,\dots,p^{n},q^1,\dots,q^{n})$ are such that
	\begin{equation*}
	\begin{cases}
	\Gamma(\mathcal{F}_1)_{\mid U}=\left<\frac{\partial}{\partial
		p^1},\dots,\frac{\partial}{\partial p^n}\right>,\vspace{0.25cm}\\
	\Gamma(\mathcal{F}_2)_{\mid U}=\left<\frac{\partial}{\partial
		q^{1}},\dots,\frac{\partial}{\partial q^{n}}\right>.\end{cases}
	\end{equation*}
	Such a chart, and such local coordinates, are said to be adapted to the bi-Lagrangian structure $(\mathcal{F}_{1},\mathcal{F}_{2})$. Moreover, if
	$$\omega=\sum_{i=1}^{n}dq^i\wedge dp^i,$$
	then such a chart, and such local coordinates, are said to be adapted to the bi-Lagrangian structure $(\omega,\mathcal{F}_{1},\mathcal{F}_{2})$.
	
	Let $\pi: M\times \mathbb{R}^{2n}\longrightarrow M$ be the natural projection. We define
	$\Gamma\left(\mathcal{F}_1^{\pi}\right)$ and $\Gamma\left(\mathcal{F}_2^{\pi}\right)$ as follows
	\begin{equation*}
	\begin{cases}
	\Gamma\left(\mathcal{F}_1^{\pi}\right)=\Gamma\left(\mathcal{F}_1\right)+\left<\frac{\partial}{\partial \xi_{n+1}},\dots,\frac{\partial}{\partial \xi_{2n}}\right>\subset \Gamma\left(T\left(M\times \mathbb{R}^{2n}\right)\right),\vspace{0.25cm}\\
	\Gamma\left(\mathcal{F}_2^{\pi}\right)=\Gamma\left(\mathcal{F}_2\right)+\left<\frac{\partial}{\partial \xi_1},\dots,\frac{\partial}{\partial \xi_n}\right>\subset \Gamma\left(T\left(M\times \mathbb{R}^{2n}\right)\right)\end{cases}
	\end{equation*}
	where $\xi_i$, $i=1,\dots, 2n$ are coordinates in $\mathbb{R}^{2n}$.

	Let $(M, \omega,\mathcal{F}_1,\mathcal{F}_2)$ be a bi-Lagrangian  $2n$-manifold. Let us write
	$$\mbox{Lift}( M, \omega,\mathcal{F}_1,\mathcal{F}_2)=(M\times\mathbb{R}^{2n},\tilde{\omega},   \mathcal{F}_1^{\pi},\mathcal{F}_2^{\pi}), $$
	and
	$$\mbox{Lift}^{k+1}( M, \omega,\mathcal{F}_1,\mathcal{F}_2)=\mbox{Lift}^{k}(M\times\mathbb{R}^{2n},\tilde{\omega},   \mathcal{F}_1^{\pi},\mathcal{F}_2^{\pi}), \; k\in\mathbb{N}. $$
	
	We show that, $\mbox{Lift}^{k}( M, \omega,\mathcal{F}_1,\mathcal{F}_2)$ exists for every $k\in\mathbb{N}$ (see Corollary~\ref{Bicor1}); this means,  $(M, \omega,\mathcal{F}_1,\mathcal{F}_2)$ is infinitely  liftable.
	
	Note that the set $\left(\mathbb{R}^{m}\right)^*$ of  linear forms on $\mathbb{R}^{m}$ and $\mathbb{R}^{m}$ are diffeomorphic. Depending on the context, $\mathbb{R}^{m}$ will sometimes be considered as  $\left(\mathbb{R}^{m}\right)^*$.
	
	A symplectomorphism $\psi$ between two symplectic manifolds $(M_1,\omega_1)$ and $(M_2,\omega_2)$ is a diffeomorphism    $\psi: M_1\longrightarrow M_2$ such that
	$\psi^*\omega_2=\omega_1$. Observe that the set  $Symp(M_1,\omega_1)$ of all symplectomorphisms from $(M_1,\omega_1)$ to itself is a group.
	
	Let  $Conn(M) $ be the set of linear connections on $M$.
	Let $\nabla\in Conn(M) $. The torsion tensor $T_{\nabla}$ (or simply $T$ if there is no ambiguity) and curvature tensor $R_{\nabla}$  (or simply $R$) are given respectively by
	$$ T_{\nabla}(X,Y)=\nabla_XY-\nabla_YX-[X,Y], \; X,Y\in\mathfrak{X}(M)$$
	and
	$$ R_{\nabla}(X,Y)Z=\nabla_X{\nabla_YZ}-\nabla_Y{\nabla_X^Z}-\nabla_{[X,Y]} , \; X,Y,Z\in\mathfrak{X}(M)$$
	where $[X,Y]:=X\circ Y-Y\circ X$ is the Lie bracket of $X$ and $Y$.
	
	
	We say that a bi-Lagrangian structure is affine when its Hess connection $\nabla$ is a curvature-free connection; that is, $\nabla$ is flat.  We denote by  $\mathcal{B}_{lp}(M) $ the set of  affine  bi-Lagrangian  structures on $M$. The set $\mathcal{B}_{lp}(M) $ is characterized  in Theorem~\ref{c10}.
	
	We say that a connection $\nabla$
	\begin{enumerate}
		\item[-] parallelizes $\omega$ if  $\nabla\omega=0$; this means,
		\begin{equation} \label{Bieq3}
		\omega(\nabla_{X}{ Y},Z)+\omega(Y,\nabla_{ X}{Z})=X\omega(Y,Z),\; X,Y, Z\in\mathfrak{X}(M);
		\end{equation}
		\item[-] preserves $\mathcal{F}$ if  $\nabla {\Gamma\left(\mathcal{F}\right)}\subseteq \Gamma\left(\mathcal{F}\right)$; more precisely,
		\begin{equation}  \label{Bieq4}
		\nabla_XY\in\Gamma\left(\mathcal{F}\right),\; (X,Y)\in \mathfrak{X}(N)\times\Gamma\left(\mathcal{F}\right).
		\end{equation}	
	\end{enumerate}

	Let $f,g\in C^{\infty}(M)$. The Poisson bracket $\{f,g\}$ of  $f$ and $g$ is the smooth function defined by
	$$\{f,g\}:=\omega (X_f,X_g)$$
	where $X_f$ is the unique vector field verifying $\omega(X_f,Y)=-df(Y)$ for all $Y\in\mathfrak{X}(M)$. We call $X_f$ the Hamiltonian vector field with Hamiltonian function $f$.
	
	Einstein summation convention: an index repeated as sub and superscript in a product represents summation over the range of the index. For example,
	$$\lambda^j\xi_j=\sum_{j=1}^n \lambda^j\xi_j.$$
	In the same way,
	$$X^j\frac{\partial}{\partial y^j}=\sum_{j=1}^nX^j\frac{\partial}{\partial y^j}.$$

	Let $k\in\mathbb{N}$.  Instead of $\{1,2,\dots,k\}$ we will simply write $[k]$. The expression
	$I_k$ stands for the $k\times k$ identity matrix in $\mathbb{R}$.




	\subsection{Technical tools}
	In this part, we present results that we will need in the following.
	\subsubsection{Symplectic manifolds}
	These manifolds provide ideal spaces for some dynamics. The collection of all symplectic manifolds forms a category where arrow or morphism set between two objects (symplectic manifolds) is the set of symplectomorphism between them. Among many results on this category, the cotangent bundle of a manifold is endowed with a so-called tautological 2-form, and a diffeomorphism between two manifolds lifts as a symplectomorphism on their cotangent bundles endowed with their respective tautological  2-forms. This part is devoted to the precise formulations of these results. For more familiarization with the concepts covered in this section, the reader is referred to \cite{dasilva, paul}.
	%

	Let $M$ be a $m$-manifold and let $q:T^*M\longrightarrow M$ be the natural projection.
	The tautological 1-form or Liouville 1-form $\theta$ is defined by
	$$ \theta_{(x,\alpha_x)}(v)=\alpha_x\left(T_{(x,\alpha_x)}q(v)\right),\;  (x,\alpha_x)\in T^{*}M,\,v\in T_xM, $$
	and its exterior differential $d\theta$  is called  the canonical symplectic form or Liouville 2-form  on  the cotangent
	bundle $T^{*}M$.	
	
	Note that for any coordinate chart $(U, x^1,\dots,x^m)$ on $M$, with associated cotangent coordinate chart $(T^*U, x^1,\dots,x^m, \xi_1,\dots,\xi_m)$ we have
	$$\theta=\sum_{1}^{m} \xi_idx_i,$$
	and
	$$d\theta=\sum_{1}^{m} d\xi_i\wedge dx_i.$$
	\begin{proposition}\label{liouville}
		Let $M$ be a manifold. The cotangent bundle $T^{*}M$ of $M$ endowed with   the canonical symplectic form $d\theta$ is a symplectic manifold. 	
	\end{proposition}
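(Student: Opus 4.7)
The plan is to verify the two defining properties of a symplectic form for $d\theta$ on $T^*M$: closedness and nondegeneracy. Closedness is immediate because $d\theta$ is exact, hence $d(d\theta)=0$ by $d^2=0$; this takes one line and requires no further work. The substantive part is nondegeneracy, which I would establish by a direct local coordinate computation using the cotangent bundle charts already introduced just before the statement.

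First, I would fix a point $(x,\alpha_x)\in T^*M$ and a coordinate chart $(U,x^1,\dots,x^m)$ around $x$, producing the associated cotangent chart $(T^*U, x^1,\dots,x^m,\xi_1,\dots,\xi_m)$. Using the coordinate expression already recorded,
\begin{equation*}
d\theta = \sum_{i=1}^{m} d\xi_i\wedge dx^i,
\end{equation*}
I would compute the matrix of the bilinear form $d\theta_{(x,\alpha_x)}$ in the ordered basis $\left(\frac{\partial}{\partial x^1},\dots,\frac{\partial}{\partial x^m},\frac{\partial}{\partial \xi_1},\dots,\frac{\partial}{\partial \xi_m}\right)$ of $T_{(x,\alpha_x)}(T^*M)$. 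Evaluating $d\xi_i\wedge dx^i$ on pairs of these basis vectors yields the block matrix
\begin{equation*}
\begin{pmatrix} 0 & -I_m \\ I_m & 0 \end{pmatrix},
\end{equation*}
whose determinant is $1$, hence it is invertible. This shows $d\theta_{(x,\alpha_x)}$ is a nondegenerate bilinear form on $T_{(x,\alpha_x)}(T^*M)$.

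Since the point was arbitrary and the nondegeneracy condition is pointwise, it follows that $d\theta$ is nondegenerate on all of $T^*M$. Combined with closedness, this gives the conclusion that $(T^*M, d\theta)$ is a symplectic manifold. I do not anticipate any real obstacle: the result is essentially a definition-unpacking, and the only potential subtlety is verifying that the local expression for $\theta$ (and hence $d\theta$) is coordinate independent, but this is built into the intrinsic definition of the Liouville $1$-form given in the excerpt, so the passage from the chosen chart to the global conclusion is immediate.
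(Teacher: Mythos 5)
Your proof is correct and is the standard argument: exactness gives closedness for free, and the local expression $d\theta=\sum_i d\xi_i\wedge dx^i$ gives the invertible block matrix $\left(\begin{smallmatrix}0&-I_m\\ I_m&0\end{smallmatrix}\right)$, hence pointwise nondegeneracy. The paper itself offers no proof of this proposition, citing it as a known result from the symplectic geometry literature, so your argument simply supplies the routine verification that those references contain.
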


	%
	\begin{proposition}\label{lifting of symplecto} Let $M_1$ and $M_2$ be two diffeomorphic smooth manifolds, and  let $\varphi:M_1\longrightarrow M_2$  be a
		diffeomorphism. The lift
		\begin{equation*} \hat{\varphi}:
		z=(x,\alpha_x)\longmapsto(\varphi(x),
		(\varphi^{-1*}\alpha)_{\varphi(x)})\end{equation*} of $\varphi$ is
		a symplectomorphism from $ (T^{*}M_1,d\theta_1)$ to
		$(T^{*}M_2,d\theta_2)$ where $d\theta_1$ and $d\theta_2$ are  the canonical symplectic forms on $T^{*}M_1$ and $T^{*}M_2$ respectively.
	\end{proposition}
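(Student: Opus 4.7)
The approach is to reduce the symplectic claim to a stronger primitive-level identity: I plan to show that $\hat{\varphi}^{*}\theta_{2}=\theta_{1}$, where $\theta_{i}$ is the Liouville 1-form on $T^{*}M_{i}$. Since the exterior differential commutes with pullback, this immediately gives $\hat{\varphi}^{*}d\theta_{2}=d\theta_{1}$, which is the required symplectomorphism property. It also provides the quickest and most conceptual route, avoiding any coordinate calculation.

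First I would check that $\hat{\varphi}$ is a diffeomorphism. An explicit inverse is $(y,\beta_{y})\longmapsto(\varphi^{-1}(y),(\varphi^{*}\beta)_{\varphi^{-1}(y)})$, and smoothness in both directions follows from the smoothness of $\varphi$, of $\varphi^{-1}$, and of the fiberwise linear dual. Next I would record the base-point intertwining relation
$$q_{2}\circ\hat{\varphi}=\varphi\circ q_{1},$$
whose differential $Tq_{2}\circ T\hat{\varphi}=T\varphi\circ Tq_{1}$ is the technical heart of the argument.

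Then, for $z=(x,\alpha_{x})\in T^{*}M_{1}$ and $v\in T_{z}T^{*}M_{1}$, I would unfold the definition of $\theta_{2}$ at $\hat{\varphi}(z)=(\varphi(x),(\varphi^{-1*}\alpha)_{\varphi(x)})$ to get
$$(\hat{\varphi}^{*}\theta_{2})_{z}(v)=(\varphi^{-1*}\alpha)_{\varphi(x)}\bigl(Tq_{2}\circ T\hat{\varphi}(v)\bigr),$$
substitute the intertwining relation so that the inner expression becomes $T_{x}\varphi\bigl(T_{z}q_{1}(v)\bigr)$, and apply the identity $(\varphi^{-1*}\alpha)_{\varphi(x)}\circ T_{x}\varphi=\alpha_{x}$ which follows from the chain rule applied to $\varphi^{-1}\circ\varphi=\mathrm{id}_{M_{1}}$. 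The expression then collapses to $\alpha_{x}(T_{z}q_{1}(v))=(\theta_{1})_{z}(v)$, establishing $\hat{\varphi}^{*}\theta_{2}=\theta_{1}$ and hence the proposition.

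The main obstacle is purely notational bookkeeping: one must remain vigilant about the base point at which each differential is taken and at which each covector lives, because the symbol $\varphi^{-1*}\alpha$ implicitly carries dependencies on both $x$ and $\varphi(x)$. Once the intertwining identity $q_{2}\circ\hat{\varphi}=\varphi\circ q_{1}$ is in place, no further substantive difficulty remains.
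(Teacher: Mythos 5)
Your proof is correct: establishing the stronger identity $\hat{\varphi}^{*}\theta_{2}=\theta_{1}$ via the intertwining relation $q_{2}\circ\hat{\varphi}=\varphi\circ q_{1}$ and the chain-rule cancellation $(\varphi^{-1*}\alpha)_{\varphi(x)}\circ T_{x}\varphi=\alpha_{x}$ is exactly the standard argument for the naturality of the tautological one-form. The paper states this proposition as a known technical tool without proof, deferring to the cited references, and your argument coincides with the one given there.
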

	\begin{remark}\label{liouville, lifting of symplecto}
		Let $M$ be a $m$-manifold.
		Proposition~\ref{liouville} and 	Proposition~\ref{lifting of symplecto} still hold by replacing $T^{*}M$ with $M\times\mathbb{R}^{m}$.	
	\end{remark}
	\subsubsection{Bi-Lagrangian (Para-K\"{a}hler) manifolds}
	These manifolds has been intensively explored in the past years, see \cite{2, 11, 3, GB1, FR1, FR2, 7, FE}. Among the many reasons to study them, they are the areas of geometric quantization and of Koszul-Vinberg Cohomology. In this part, we briefly give some needed results concerning  Hess connections and affine bi-Lagrangian structures.
	
	The Hess or bi-Lagrangian connection of a bi-Lagrangian structure is defined from the following theorem.
	\begin{theorem}\cite[Theor. 1]{7}\label{b20}
		Let	$(M,\omega,\mathcal{F}_1,\mathcal{F}_2)$ be a bi-Lagrangian manifold. There exists  a unique torsion-free connection $\nabla$ on $M$ such $\nabla$ parallelizes $\omega$ and preserves both foliations.
	\end{theorem}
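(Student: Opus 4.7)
My plan is to prove existence and uniqueness simultaneously by producing explicit formulas for $\nabla_X Y$ that are forced by the three requirements (torsion-free, $\nabla\omega=0$, preservation of both foliations), and then verify those formulas do define a linear connection with all three properties. Throughout, I use the decomposition $TM=T\mathcal{F}_1\oplus T\mathcal{F}_2$ with associated projections $\pi_1,\pi_2$, and the fact that the Lagrangian condition makes $\omega$ restrict to a perfect pairing between $\Gamma(\mathcal{F}_1)$ and $\Gamma(\mathcal{F}_2)$.

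First I would handle the mixed cases. Take $X\in\Gamma(\mathcal{F}_1)$ and $Y\in\Gamma(\mathcal{F}_2)$. Preservation forces $\nabla_X Y\in\Gamma(\mathcal{F}_2)$ and $\nabla_Y X\in\Gamma(\mathcal{F}_1)$. Projecting the torsion-free identity $\nabla_X Y-\nabla_Y X=[X,Y]$ onto $T\mathcal{F}_2$ and $T\mathcal{F}_1$ respectively then forces
\begin{equation*}
\nabla_X Y=\pi_2[X,Y],\qquad \nabla_Y X=-\pi_1[X,Y].
\end{equation*}
Next, for pure cases $X,Y\in\Gamma(\mathcal{F}_1)$, preservation gives $\nabla_X Y\in\Gamma(\mathcal{F}_1)$; by non-degeneracy of $\omega$ between the two foliations, this element is determined by its pairings with arbitrary $Z\in\Gamma(\mathcal{F}_2)$. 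Using $\nabla\omega=0$ and the mixed formula $\nabla_X Z=\pi_2[X,Z]$, together with $\omega(Y,\pi_1[X,Z])=0$, I obtain
\begin{equation*}
\omega(\nabla_X Y,Z)=X\,\omega(Y,Z)-\omega(Y,[X,Z]),
\end{equation*}
which uniquely determines $\nabla_X Y$. The symmetric formula handles the case $X,Y\in\Gamma(\mathcal{F}_2)$.

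It remains to verify that the formulas so obtained indeed define a connection with the required properties. Bilinearity over $\mathbb{R}$, $C^\infty(M)$-linearity in $X$, and the Leibniz rule in $Y$ follow from direct computation from the right-hand sides (the Bott-like formulas are standard for tensoriality, and the symplectic formula inherits tensoriality/derivation from the pairing structure). Preservation of each foliation is built in. The compatibility $\nabla\omega=0$ has been ensured on the pure-$\mathcal{F}_1$/mixed-$\mathcal{F}_2$ triples by construction; the remaining triples reduce to it using the Lagrangian identities $\omega(Y,Z)=0$ for $Y,Z$ in the same foliation.

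The step I expect to be the main obstacle is verifying torsion-freeness in the pure cases, say for $X,Y\in\Gamma(\mathcal{F}_1)$. After pairing $\nabla_X Y-\nabla_Y X$ against $Z\in\Gamma(\mathcal{F}_2)$ using the symplectic formula, one gets
\begin{equation*}
\omega(\nabla_X Y-\nabla_Y X,Z)=X\omega(Y,Z)-Y\omega(X,Z)+\omega(X,[Y,Z])-\omega(Y,[X,Z]),
\end{equation*}
and this must equal $\omega([X,Y],Z)$. The key trick is to invoke $d\omega(X,Y,Z)=0$ on the triple $(X,Y,Z)$; since $X,Y\in\Gamma(\mathcal{F}_1)$ is Lagrangian the term $Z\omega(X,Y)$ drops out, and the Koszul expansion of $d\omega=0$ yields precisely the required identity. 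Uniqueness is automatic, since the conditions determined $\nabla_X Y$ completely on each of the four cases, so the locally defined $\nabla$ glues globally into the unique Hess connection.
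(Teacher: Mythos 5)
Your proof is correct. The paper does not reprove this theorem (it is quoted from Hess \cite{7}), but your case-by-case derivation is the standard existence-and-uniqueness argument, and the connection you construct is exactly the one the paper records in Proposition~\ref{c13}: your mixed-case formulas $\nabla_XY=\pi_2[X,Y]$, $\nabla_YX=-\pi_1[X,Y]$ are the Bott terms $[X_1,Y_2]_2$ and $[X_2,Y_1]_1$ of \eqref{17c}, while your pure-case identity $\omega(\nabla_XY,Z)=X\omega(Y,Z)-\omega(Y,[X,Z])$ is precisely the Cartan-formula expansion of $i_{D(X,Y)}\omega=L_Xi_Y\omega$ in \eqref{18c}, with the closedness of $\omega$ entering (as you note) only to secure torsion-freeness in the pure cases.
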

	
	Bi-Lagrangian connections  are explicitly defined in the following result, see  \cite[p. 14]{2},  \cite[p. 360]{11}, \cite[p. 65]{3}.
	\begin{proposition}\label{c13} Let $(M,\omega,\mathcal{F}_1,\mathcal{F}_2)$ be a bi-Lagrangian manifold. The Hess connection $\nabla$ of $(\omega,\mathcal{F}_1,\mathcal{F}_2)$ is
		\begin{equation} \nabla_{(X_1, X_2)}{(Y_1,Y_2)} = (D(X_1,Y_1
		)+[X_2,Y_1]_1 , D(X_2,Y_2)+[X_1,Y_2]_2)  \label{17c}\end{equation}
		where $D:\,\mathfrak{X}(M)\times\mathfrak{X}(M)\longmapsto \mathfrak{X}(M)$ is the map verifying
		\begin{equation} i_{D(X,Y )}\omega= L_Xi_Y\omega,
		\label{18c}\end{equation} 	
		and 	$X_i$ is the  $\mathcal{F}_i$-component of $X$ for each $i\in[2]$.
	\end{proposition}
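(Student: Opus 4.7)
The plan is to verify directly that the right-hand side of (\ref{17c}) defines a symplectic torsion-free connection that preserves both $\mathcal{F}_1$ and $\mathcal{F}_2$, and then invoke the uniqueness statement in Theorem~\ref{b20} to conclude that it must coincide with the Hess connection. First I would check that $(X,Y)\mapsto \nabla_X Y$ defined by (\ref{17c}) is indeed a connection: $\mathcal{C}^{\infty}(M)$-linearity in $X$ is automatic because $X\mapsto X_i$ is pointwise, and the Leibniz rule in $Y$ reduces to the Leibniz rules for $D$ (inherited from $L_X$ in (\ref{18c})) and for the Lie bracket together with the observation that the projection onto $\Gamma(\mathcal{F}_i)$ is $\mathcal{C}^\infty$-linear. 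Preservation of the foliations is then immediate from the formula: the first slot of $\nabla_X Y$ lies in $\Gamma(\mathcal{F}_1)$ and the second in $\Gamma(\mathcal{F}_2)$, so that whenever $Y\in \Gamma(\mathcal{F}_i)$ one has $\nabla_X Y\in\Gamma(\mathcal{F}_i)$, which is exactly (\ref{Bieq4}).

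For torsion-freeness I would treat the three cases $X,Y\in\Gamma(\mathcal{F}_1)$, $X,Y\in\Gamma(\mathcal{F}_2)$, and the mixed case. In the mixed case $X=X_1\in\Gamma(\mathcal{F}_1)$, $Y=Y_2\in\Gamma(\mathcal{F}_2)$, one computes $\nabla_{X_1}Y_2-\nabla_{Y_2}X_1=[X_1,Y_2]_2-[Y_2,X_1]_1=[X_1,Y_2]_2+[X_1,Y_2]_1=[X_1,Y_2]$, so $T(X_1,Y_2)=0$. For the two pure cases the key identity to verify is $D(X_1,Y_1)-D(Y_1,X_1)=[X_1,Y_1]$ whenever $X_1,Y_1\in\Gamma(\mathcal{F}_1)$ (and the symmetric statement for $\mathcal{F}_2$). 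This follows from (\ref{18c}) by writing $L_{X_1}i_{Y_1}\omega-L_{Y_1}i_{X_1}\omega=i_{[X_1,Y_1]}\omega+2di_{X_1}i_{Y_1}\omega$ (using Cartan's formula and $d\omega=0$) and then noticing that $\omega(X_1,Y_1)=0$ because $\mathcal{F}_1$ is Lagrangian, so the exact term drops out and the identity can be read off using nondegeneracy of $\omega$.

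The main obstacle is the verification of $\nabla\omega=0$, i.e.\ equation (\ref{Bieq3}). My approach is to first extract from (\ref{18c}) the pointwise identity
\begin{equation*}
\omega\bigl(D(X,Y),Z\bigr)=X\omega(Y,Z)-\omega\bigl(Y,[X,Z]\bigr),
\end{equation*}
obtained by pairing $L_X i_Y\omega$ with a vector field $Z$ and using $(L_X\alpha)(Z)=X(\alpha(Z))-\alpha([X,Z])$ for one-forms. Then, since the two sides of (\ref{Bieq3}) are $\mathcal{C}^\infty$-bilinear in $(Y,Z)$, it suffices to check the equality when $Y$ and $Z$ each lie in one of the subbundles $T\mathcal{F}_i$. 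When $Y,Z\in\Gamma(\mathcal{F}_1)$ both sides vanish because $\nabla_X Y,\nabla_X Z\in\Gamma(\mathcal{F}_1)$ and $\omega$ is Lagrangian on $\mathcal{F}_1$; likewise for $\mathcal{F}_2$. The substantive case $Y=Y_1\in\Gamma(\mathcal{F}_1)$, $Z=Z_2\in\Gamma(\mathcal{F}_2)$ is handled by splitting $X=X_1+X_2$ and applying the extracted identity twice, once to $X_1\omega(Y_1,Z_2)$ and once to $X_2\omega(Z_2,Y_1)$; after using $[X_2,Y_1]_2\perp Z_2$ and $[X_1,Z_2]_1\perp Y_1$ (with respect to $\omega$), the $D$-terms and bracket-terms reassemble exactly into $\omega(\nabla_X Y_1,Z_2)+\omega(Y_1,\nabla_X Z_2)$.

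Finally, having produced a torsion-free symplectic connection preserving both Lagrangian foliations, the uniqueness clause of Theorem~\ref{b20} forces it to equal the Hess connection, which establishes the proposition. The only delicate point is keeping track of the $\mathcal{F}_1$/$\mathcal{F}_2$ projections in the $\nabla\omega=0$ computation; once one observes that the mixed bracket components vanish against $\omega$ in the correct slots, the algebra collapses cleanly.
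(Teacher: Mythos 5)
Your proposal is sound, but note that the paper does not actually prove Proposition~\ref{c13}: it imports the formula from the literature (Boyom's papers and Hess), so your argument is a genuine reconstruction rather than a parallel of anything in the text. The route you choose --- verify that the right-hand side of (\ref{17c}) is a torsion-free symplectic connection preserving both foliations and then invoke the uniqueness clause of Theorem~\ref{b20} --- is the natural one, and all the key identities you cite are correct: the pointwise formula $\omega(D(X,Y),Z)=X\omega(Y,Z)-\omega(Y,[X,Z])$, the vanishing of the mixed torsion, and the reassembly of the $D$-terms and bracket-terms in the $\nabla\omega=0$ computation after discarding $\omega([X_2,Y_1]_2,Z_2)$ and $\omega(Y_1,[X_1,Z_2]_1)$. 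Three small points deserve more care than you give them. First, tensoriality of $D(X_1,Y_1)$ in $X_1$ is not ``automatic'': one has $L_{fX}i_Y\omega=fL_Xi_Y\omega+\omega(Y,X)\,df$, so you need $\omega(Y_1,X_1)=0$, i.e.\ the Lagrangian condition, to kill the extra term (and the analogous check for $X_2\mapsto[X_2,Y_1]_1$ uses that the $\mathcal{F}_1$-projection annihilates $Y_1(f)X_2$). Second, preservation of the foliations is not ``immediate from the formula'': you must show $D(X_1,Y_1)\in\Gamma(\mathcal{F}_1)$, which follows from $\omega(D(X_1,Y_1),Z_1)=X_1\omega(Y_1,Z_1)-\omega(Y_1,[X_1,Z_1])=0$ using both that $\mathcal{F}_1$ is Lagrangian and that it is integrable; this fact is also what makes your ``both sides vanish'' case of $\nabla\omega=0$ work. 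Third, the Cartan-type identity should read $L_Xi_Y\omega-L_Yi_X\omega=i_{[X,Y]}\omega-d(\omega(X,Y))$ for closed $\omega$ (your coefficient $2$ on the exact term is off), though this is harmless since $\omega(X_1,Y_1)=0$ anyway. With these details filled in, your proof is complete and has the advantage of making the paper's cited formula self-contained.
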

	
	The following result characterizes affine bi-Lagrangian structures.
	\begin{theorem}\cite[Theor. 2]{7} \label{c10}
		Let  $(\omega,\mathcal{F}_1,\mathcal{F}_2)$ be a bi-Lagrangian structure on a       $2n$-manifold $M$ with
		$\nabla$ as its Hess connection. Then the following assertions are equivalent.
		\begin{description}
			\item[a)] The connection $\nabla$ is flat.
			\item[b)] Each point of $M$ has a coordinate chart adapted to $(\omega,\mathcal{F}_1,\mathcal{F}_2)$.
			%
		\end{description}
	\end{theorem}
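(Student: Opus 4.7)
The plan is to prove the two implications separately. For the easy direction $(b)\Rightarrow(a)$, I would work directly in an adapted chart $(p^1,\dots,p^n,q^1,\dots,q^n)$ and compute $\nabla$ on the coordinate basis using Proposition~\ref{c13}. For $Y=\partial/\partial p^j$ one has $i_{Y}\omega=-dq^j$, and for $Y=\partial/\partial q^j$ one has $i_{Y}\omega=dp^j$; in both cases the result has constant coefficients, so $L_{X}i_{Y}\omega=0$ for every coordinate field $X$, and nondegeneracy of $\omega$ together with equation~(\ref{18c}) forces $D(X,Y)=0$. Since coordinate fields commute, formula~(\ref{17c}) then reduces to $\nabla_{\partial/\partial x^{a}}\partial/\partial x^{b}=0$ for every pair, so all Christoffel symbols vanish in the chart and $R_{\nabla}=0$.

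The harder direction $(a)\Rightarrow(b)$ calls for the construction of a parallel frame adapted to the Lagrangian splitting. Fix $x_{0}\in M$; on a simply connected neighbourhood $U$ of $x_{0}$ the flatness of $\nabla$ makes parallel transport path-independent, so any basis $(e_{1},\dots,e_{n})$ of $(T\mathcal{F}_{1})_{x_{0}}$ and $(f_{1},\dots,f_{n})$ of $(T\mathcal{F}_{2})_{x_{0}}$ extends to parallel vector fields $X_{1},\dots,X_{n}$ and $Y_{1},\dots,Y_{n}$ on $U$. The preservation property~(\ref{Bieq4}) guarantees that parallel transport respects each distribution, so $X_{i}\in\Gamma(\mathcal{F}_{1})_{\mid U}$ and $Y_{j}\in\Gamma(\mathcal{F}_{2})_{\mid U}$. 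Torsion-freeness combined with parallelism gives $[X_{i},X_{j}]=[Y_{i},Y_{j}]=[X_{i},Y_{j}]=0$, and the simultaneous rectification theorem for commuting vector fields produces local coordinates $(p^{1},\dots,p^{n},q^{1},\dots,q^{n})$ on $U$ with $X_{i}=\partial/\partial p^{i}$ and $Y_{j}=\partial/\partial q^{j}$. This chart is automatically adapted to $(\mathcal{F}_{1},\mathcal{F}_{2})$.

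To arrive at the Darboux normal form $\omega=\sum_{i}dq^{i}\wedge dp^{i}$, I would observe, using $\nabla\omega=0$ and $\nabla X_{i}=\nabla Y_{j}=0$ in~(\ref{Bieq3}), that the functions $\omega(X_{i},X_{j})$, $\omega(Y_{i},Y_{j})$ and $\omega(X_{i},Y_{j})$ are all constant on $U$. The Lagrangian condition forces $\omega(X_{i},X_{j})=\omega(Y_{i},Y_{j})=0$, and nondegeneracy of $\omega$ makes the constant matrix $(a_{ij})=(\omega(Y_{i},X_{j}))$ invertible. A constant linear substitution of the $Y_{i}$ (which remain parallel and tangent to $\mathcal{F}_{2}$) normalizes this matrix to the identity, producing, after the corresponding linear change of the $q$-coordinates, a chart adapted to $(\omega,\mathcal{F}_{1},\mathcal{F}_{2})$. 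The principal obstacle of the argument is the parallel-frame step of the second paragraph: it simultaneously uses flatness of $\nabla$ (to obtain parallel sections at all), preservation of both foliations (to keep the frame compatible with the splitting $TM=T\mathcal{F}_{1}\oplus T\mathcal{F}_{2}$) and torsion-freeness (to force the frame to commute), so every hypothesis on the Hess connection is used in an essential way.
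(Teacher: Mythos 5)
This theorem is not proved in the paper at all: it is imported verbatim from Hess (Theorem~2 of \cite{7}), so there is no in-paper argument to compare against. Judged on its own, your proof is correct and is essentially the standard argument for this result. The direction (b)$\Rightarrow$(a) via Proposition~\ref{c13} is a clean verification: $i_{\partial/\partial p^j}\omega=-dq^j$ and $i_{\partial/\partial q^j}\omega=dp^j$ are indeed closed constant-coefficient forms, so $L_Xi_Y\omega=0$ for coordinate fields and (\ref{18c}) kills $D$, while commutativity kills the bracket terms in (\ref{17c}). The direction (a)$\Rightarrow$(b) via a parallel frame adapted to the splitting, rectification of the commuting frame, and a constant linear renormalization using $\nabla\omega=0$ is exactly the right mechanism, and you correctly identify where each of the three defining properties of the Hess connection enters. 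The only step I would ask you to expand is the assertion that parallel transport respects each distribution: this needs the small argument that if $V=V_1+V_2$ is parallel along a curve with $V_i$ its $\mathcal{F}_i$-components, then (\ref{Bieq4}) applied to both foliations forces $\nabla_{\dot\gamma}V_1$ and $\nabla_{\dot\gamma}V_2$ to lie in the respective (transversal) distributions, hence each $V_i$ is itself parallel and $V_2\equiv 0$ whenever $V_2(0)=0$. With that spelled out, and the remark that parallel transport preserves linear independence so the transported vectors really do frame $U$, the proof is complete.
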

	



	\section{Statements and proofs of results}

	\subsection{Statements of  results}
	Our first result presents lifted bi-Lagrangian structures on the trivial bundles of some manifolds.
	\begin{theorem}\label{Bitheo3}
		Let $M$ be a $2n$-manifold endowed with  a bi-Lagrangian structure  $(\omega,\mathcal{F}_1,\mathcal{F}_2)$. Then
		$(\tilde{\omega},   \mathcal{F}_1^{\pi},\mathcal{F}_2^{\pi})$ is a bi-Lagrangian structure  on
		$M\times\mathbb{R}^{2n}$ where $\tilde{\omega}=\pi^{*}\omega+d\theta$.\end{theorem}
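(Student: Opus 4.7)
The plan is to reduce every assertion to a local computation in adapted coordinates. Since symplecticness, involutivity, transversality, and the Lagrangian condition are all local properties, I would fix a chart $(U, p^1, \ldots, p^n, q^1, \ldots, q^n)$ on $M$ adapted to $(\omega, \mathcal{F}_1, \mathcal{F}_2)$, and work in the induced chart on $U \times \mathbb{R}^{2n}$ with coordinates $(p^i, q^i, \xi_1, \ldots, \xi_{2n})$. Combining the adapted local form of $\omega$ with the expression for $d\theta$ on $M \times \mathbb{R}^{2n}$ provided by Proposition~\ref{liouville} together with Remark~\ref{liouville, lifting of symplecto} (treating $(p^1,\dots,p^n,q^1,\dots,q^n)$ as base coordinates), one obtains
\begin{equation*}
\tilde{\omega} = \sum_{i=1}^{n} dq^i \wedge dp^i + \sum_{i=1}^{n} d\xi_i \wedge dp^i + \sum_{i=1}^{n} d\xi_{n+i} \wedge dq^i.
\end{equation*}

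The first step is to show that $\tilde{\omega}$ is symplectic. Closedness is immediate: $\pi^{*}\omega$ is closed because $\omega$ is, and $d\theta$ is exact. For nondegeneracy, the substitution $\tilde{q}^i := q^i + \xi_i$ puts $\tilde{\omega}$ in the standard Darboux form
\begin{equation*}
\tilde{\omega} = \sum_{i=1}^{n} d\tilde{q}^i \wedge dp^i + \sum_{i=1}^{n} d\xi_{n+i} \wedge dq^i,
\end{equation*}
with conjugate pairs $(p^i, \tilde{q}^i)$ and $(q^i, \xi_{n+i})$, from which nondegeneracy is manifest. Alternatively, a direct inspection of $\iota_V \tilde{\omega} = 0$ in the original coordinates forces $V = 0$.

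The second step is to verify the bi-Lagrangian conditions. In the adapted coordinates, $T\mathcal{F}_1^{\pi}$ is spanned by the coordinate vector fields $\partial/\partial p^i$ and $\partial/\partial \xi_{n+j}$ (for $i, j \in [n]$), while $T\mathcal{F}_2^{\pi}$ is spanned by $\partial/\partial q^i$ and $\partial/\partial \xi_j$ (for $i, j \in [n]$). Each distribution is spanned by pairwise commuting coordinate fields, so Frobenius yields involutivity and hence genuine foliation structures; together the two sets of $2n$ generators form a coordinate frame on $U \times \mathbb{R}^{2n}$, which gives the transversality $T(M \times \mathbb{R}^{2n}) = T\mathcal{F}_1^{\pi} \oplus T\mathcal{F}_2^{\pi}$. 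For the Lagrangian property, each of the three summands of $\tilde{\omega}$ pairs a generator of $T\mathcal{F}_1^{\pi}$ with one of $T\mathcal{F}_2^{\pi}$: the differentials $dp^i$ and $d\xi_{n+j}$ annihilate $T\mathcal{F}_2^{\pi}$, whereas $dq^i$ and $d\xi_j$ (for $j \leq n$) annihilate $T\mathcal{F}_1^{\pi}$. Consequently $\tilde{\omega}$ vanishes on any pair of generators drawn from the same distribution, so each $\mathcal{F}_k^{\pi}$ is isotropic of rank $2n = \tfrac{1}{2}\dim(M \times \mathbb{R}^{2n})$, hence Lagrangian.

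The content here is largely bookkeeping; there is no genuine obstacle once $\tilde{\omega}$ is written explicitly in adapted coordinates and one observes how the two blocks $\{\xi_1,\dots,\xi_n\}$ and $\{\xi_{n+1},\dots,\xi_{2n}\}$ are assigned to the splitting $TM = T\mathcal{F}_1 \oplus T\mathcal{F}_2$. The only point requiring care is ensuring that the locally defined $d\theta$ agrees with the global object appearing in the statement, which is the content of Remark~\ref{liouville, lifting of symplecto}; after that, each verification is a one-line evaluation.
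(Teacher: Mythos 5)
Your overall strategy---reduce every claim to a computation in a foliated chart---is the same as the paper's, and your involutivity and isotropy arguments are sound (indeed cleaner than the paper's, which checks $d\theta([X,Y],Z)=0$ by an explicit coordinate computation rather than observing that each lifted distribution is locally framed by commuting coordinate fields). But there is one genuine error at the start: you fix a chart adapted to $(\omega,\mathcal{F}_1,\mathcal{F}_2)$, i.e.\ you assume that in foliated coordinates $\omega=\sum_i dq^i\wedge dp^i$. By the paper's own Theorem~\ref{c10}, such a chart exists if and only if the Hess connection is flat, so as written your argument proves the theorem only for \emph{affine} bi-Lagrangian structures. For a general bi-Lagrangian structure one only has a chart adapted to $(\mathcal{F}_1,\mathcal{F}_2)$ (which is what the paper uses), and there the Lagrangian condition gives merely $\omega=\sum_{i,j}\omega_{ij}\,dq^i\wedge dp^j$ with $(\omega_{ij})$ an invertible matrix of \emph{functions}. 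Your substitution $\tilde{q}^i=q^i+\xi_i$ then fails to produce a Darboux form, since $\sum_{i,j}\omega_{ij}\,dq^i\wedge dp^j+\sum_i d\xi_i\wedge dp^i$ does not collapse to $\sum_i d\tilde{q}^i\wedge dp^i$ unless $\omega_{ij}=\delta_{ij}$.

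The damage is localized and repairable. Isotropy of each $\mathcal{F}_k^{\pi}$ uses only that $\pi^*\omega$ vanishes on pairs of generators from the same block (which holds because $\mathcal{F}_k$ is Lagrangian for $\omega$, with no normal form needed) and that $d\theta$ pairs each $\xi_i$ with a base coordinate from the opposite block; involutivity and transversality never touch $\omega$. For nondegeneracy, discard the change of variables and run your ``alternative'' argument $i_V\tilde{\omega}=0$ with the general block form: pairing $V$ against $\partial/\partial\xi_1,\dots,\partial/\partial\xi_{2n}$ forces all base components of $V$ to vanish, and then pairing against $\partial/\partial p^i$ and $\partial/\partial q^i$ forces the fibre components to vanish. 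With these two adjustments your proof covers the general case and coincides in substance with the paper's.
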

	
	\begin{corollary}\label{Bicor1}	A  bi-Lagrangian  $2n$-manifold $(M, \omega,\mathcal{F}_1,\mathcal{F}_2)$ is infinitely liftable; that is,
		$\mbox{Lift}^{n}( M, \omega,\mathcal{F}_1,\mathcal{F}_2) $
		exists for every $n\in\mathbb{N}$.	
	\end{corollary}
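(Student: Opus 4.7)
The plan is a direct induction on the iteration count, which I denote by $k$ to avoid a clash with the dimension parameter $n$ of the base manifold. The induction hypothesis I would carry is slightly stronger than the bare existence statement: namely, for every $k\in\mathbb{N}$, $\mbox{Lift}^{k}(M,\omega,\mathcal{F}_1,\mathcal{F}_2)$ is a well-defined bi-Lagrangian manifold whose underlying smooth manifold has even dimension. This stronger form is what actually feeds the recursive definition
$$\mbox{Lift}^{k+1}(M,\omega,\mathcal{F}_1,\mathcal{F}_2)=\mbox{Lift}\bigl(\mbox{Lift}^{k}(M,\omega,\mathcal{F}_1,\mathcal{F}_2)\bigr),$$
because $\mbox{Lift}$ is only meaningful on an even-dimensional bi-Lagrangian input.

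The base case $k=1$ is precisely Theorem~\ref{Bitheo3}: it produces $(M\times\mathbb{R}^{2n},\tilde\omega,\mathcal{F}_1^{\pi},\mathcal{F}_2^{\pi})$, a bi-Lagrangian manifold of even dimension $4n$. For the inductive step, I would suppose that $\mbox{Lift}^{k}(M,\omega,\mathcal{F}_1,\mathcal{F}_2)$ has been constructed as a bi-Lagrangian manifold $(N,\Omega,\mathcal{G}_1,\mathcal{G}_2)$ of dimension $2m$. Applying Theorem~\ref{Bitheo3} to the triple $(\Omega,\mathcal{G}_1,\mathcal{G}_2)$ on $N$ yields the bi-Lagrangian structure $(\tilde\Omega,\mathcal{G}_1^{\pi},\mathcal{G}_2^{\pi})$ on $N\times\mathbb{R}^{2m}$, which by the very definition of $\mbox{Lift}^{k+1}$ is $\mbox{Lift}^{k+1}(M,\omega,\mathcal{F}_1,\mathcal{F}_2)$; it is again a bi-Lagrangian manifold whose dimension $4m$ is still even, closing the induction.

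The only point that needs attention is the parity bookkeeping that keeps Theorem~\ref{Bitheo3} applicable at each stage. An immediate induction shows that the underlying manifold of $\mbox{Lift}^{k}(M,\omega,\mathcal{F}_1,\mathcal{F}_2)$ has dimension $2^{k+1}n$, which is always even, so the hypotheses of Theorem~\ref{Bitheo3} are met at every step. I expect no genuine obstacle: all the analytic and geometric work, namely the verification that $\tilde\omega$ is symplectic and that $\mathcal{F}_1^{\pi},\mathcal{F}_2^{\pi}$ are transversal Lagrangian foliations, was already absorbed into the proof of Theorem~\ref{Bitheo3}. The corollary is thus a formal consequence of that theorem together with the recursive definition of $\mbox{Lift}^{k}$.
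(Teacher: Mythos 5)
Your proof is correct and is essentially the argument the paper intends: the corollary is just Theorem~\ref{Bitheo3} applied iteratively, since each lift is again an even-dimensional bi-Lagrangian manifold (the paper compresses this into a one-line reference to the adapted charts of system (\ref{b9})). Your explicit induction with the dimension count $2^{k+1}n$, and the harmless swap of the recursion $\mbox{Lift}^{k+1}=\mbox{Lift}^{k}\circ\mbox{Lift}$ for $\mbox{Lift}\circ\mbox{Lift}^{k}$, only make the bookkeeping more transparent.
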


	Before continuing to state our results, it is necessary to precise the following.
	\begin{remark}\label{act}Let $(M,\omega)$ be a symplectic manifold and let $\psi,\varphi:M\longrightarrow M$ be two diffeomorphisms.
		
		Observe that
		\begin{enumerate}
			\item[-] $\left(\psi\circ\varphi\right)_*=	\psi_*\circ\varphi_*$;
			\item[-]the map  $\nabla^\psi:  \mathfrak{X}(M)\times\mathfrak{X}(M)\longrightarrow
			\mathfrak{X}(M)$, $(X,Y)\longmapsto
			\psi_{\ast}\nabla_{\psi_{\ast
					X}^{-1}}{\psi_{\ast Y}^{-1}}$ is an element of $Conn(M)$ for all $\nabla\in Conn(M),$
			and
			\begin{align*}
			\nabla^{\psi\circ\varphi}&=\left(\psi\circ\varphi\right)_*\nabla_{\left(\psi\circ\varphi\right)_*^{-1}}{\left(\psi\circ\varphi\right)_*^{-1}}
			\\	&=	\psi_*\circ\varphi_*\nabla_{\varphi_*^{-1} \circ \psi_*^{-1}}\varphi_*^{-1} \circ \psi_*^{-1} \\
			&=\left(\nabla^{\varphi}\right)^{\psi}.
			\end{align*}
		\end{enumerate}
		Therefore, the symplectomorphism group  $Symp(M,\omega)$ of $(M,\omega)$ acts on the left of
		\begin{enumerate}
			\item[-]
			$\mathfrak{X}(M)$   as follows  \begin{align*}
			Symp(M,\omega)\times\mathfrak{X}(M)&\longrightarrow
			\mathfrak{X}(M)\\
			(\psi,X) &\longmapsto \psi_{\ast} X
			\label{Be1}	
			\end{align*}
			\item[-]   $Conn(M)$ as follows
			\begin{align*}
			Symp(M,\omega)\times Conn(M) &\longrightarrow  Conn(M)\\
			(\psi,\nabla) &\longmapsto \nabla^\psi
			\end{align*}

		\end{enumerate}
	\end{remark}
	
	In the following result, we show how a bi-Lagrangian structure can be pushed forward by a diffeomorphism. It will play a fundamental role in the proofs of Theorem~\ref{Bitheo2} and Proposition~\ref{Biprop1}.
	\begin{lemma}\label{Bilem1}
		Let $(M,\omega,\mathcal{F}_1,\mathcal{F}_2)$ be a bi-Lagrangian manifold  with $\nabla$ as its Hess connection, and let $N$ be a manifold which is diffeomorphic to $M$. Then for any diffeomorphism $\psi : M\longrightarrow N$,  $(( \psi^{-1})^*\omega,\psi_*\mathcal{F}_1, \psi_*\mathcal{F}_2)$ is a bi-Lagrangian structure on $N$, with $\nabla^{\psi}$ as its Hess connection.
		Moreover, if $(\omega,\mathcal{F}_1,\mathcal{F}_2)$ is affine, then so is $(( \psi^{-1})^*\omega,\psi_*\mathcal{F}_1, \psi_*\mathcal{F}_2)$.
	\end{lemma}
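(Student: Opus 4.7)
\medskip

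\noindent\textbf{Proof plan for Lemma~\ref{Bilem1}.}
The plan is to verify the three claims in order: that $((\psi^{-1})^{*}\omega,\psi_{*}\mathcal F_{1},\psi_{*}\mathcal F_{2})$ is a bi-Lagrangian structure on $N$, that its Hess connection is $\nabla^{\psi}$, and finally that flatness is preserved. Throughout, the engine driving every step is the naturality of the operations involved (exterior derivative, Lie bracket, interior product, pushforward of vector fields) under the diffeomorphism~$\psi$.

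First I would check that $(\psi^{-1})^{*}\omega$ is symplectic on $N$: closedness is immediate since pullback commutes with $d$, and nondegeneracy is inherited pointwise from that of $\omega$ because $T_{x}\psi$ is a linear isomorphism. Next, the sets $\psi_{*}\mathcal F_{1}$ and $\psi_{*}\mathcal F_{2}$ are foliations on $N$ simply by transporting foliated charts of $M$ by $\psi$, and transversality is preserved because $T_{x}\psi$ sends the direct sum decomposition $T_{x}M=T_{x}\mathcal F_{1}\oplus T_{x}\mathcal F_{2}$ to the corresponding decomposition at $\psi(x)$. To see that each $\psi_{*}\mathcal F_{i}$ is Lagrangian for $(\psi^{-1})^{*}\omega$, I would use~(\ref{Bieq2}): for $X\in\Gamma(\psi_{*}\mathcal F_{i})$, write $X=\psi_{*}X'$ with $X'\in\Gamma(\mathcal F_{i})$, and any $Y\in\mathfrak X(N)$ as $Y=\psi_{*}Y'$; then
\[
((\psi^{-1})^{*}\omega)(\psi_{*}X',\psi_{*}Y')=\omega(X',Y'),
\]
so the orthogonal of $\Gamma(\psi_{*}\mathcal F_{i})$ is precisely $\psi_{*}\Gamma(\mathcal F_{i}^{\perp})=\psi_{*}\Gamma(\mathcal F_{i})=\Gamma(\psi_{*}\mathcal F_{i})$.

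For the Hess connection I would rely on the uniqueness statement in Theorem~\ref{b20}: it suffices to show that $\nabla^{\psi}$ is torsion-free, parallelizes $(\psi^{-1})^{*}\omega$, and preserves each $\psi_{*}\mathcal F_{i}$. Each property transports from $\nabla$ via the identities $\psi_{*}[X,Y]=[\psi_{*}X,\psi_{*}Y]$ and $\psi_{*}(X\cdot f)=(\psi_{*}X)\cdot(f\circ\psi^{-1})$. Concretely, for torsion,
\[
T_{\nabla^{\psi}}(X,Y)=\psi_{*}\bigl(\nabla_{\psi_{*}^{-1}X}\psi_{*}^{-1}Y-\nabla_{\psi_{*}^{-1}Y}\psi_{*}^{-1}X-[\psi_{*}^{-1}X,\psi_{*}^{-1}Y]\bigr)=\psi_{*}T_{\nabla}(\psi_{*}^{-1}X,\psi_{*}^{-1}Y)=0;
\]
for the symplectic condition, plug $\nabla^{\psi}$ into~(\ref{Bieq3}) with $(\psi^{-1})^{*}\omega$ and reduce via the same substitution to~(\ref{Bieq3}) for $\nabla$ and $\omega$; for~(\ref{Bieq4}), observe that if $Y\in\Gamma(\psi_{*}\mathcal F_{i})$ then $\psi_{*}^{-1}Y\in\Gamma(\mathcal F_{i})$, so $\nabla_{\psi_{*}^{-1}X}\psi_{*}^{-1}Y\in\Gamma(\mathcal F_{i})$, and applying $\psi_{*}$ lands in $\Gamma(\psi_{*}\mathcal F_{i})$ by~(\ref{Bieq2}). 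Uniqueness in Theorem~\ref{b20} then forces the Hess connection of the pushed structure to equal $\nabla^{\psi}$.

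Finally, for the affine case, the curvature tensor transforms equivariantly:
\[
R_{\nabla^{\psi}}(X,Y)Z=\psi_{*}\bigl(R_{\nabla}(\psi_{*}^{-1}X,\psi_{*}^{-1}Y)\psi_{*}^{-1}Z\bigr),
\]
so $R_{\nabla}\equiv 0$ forces $R_{\nabla^{\psi}}\equiv 0$ and the pushed-forward structure is affine. I do not expect a genuine obstacle here; the only delicate point is keeping the $\psi_{*}$/$\psi_{*}^{-1}$ bookkeeping clean and invoking the uniqueness clause of Theorem~\ref{b20} at the right moment to avoid computing $\nabla^{\psi}$ via the explicit formula~(\ref{17c}).
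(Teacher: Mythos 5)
Your proposal is correct and follows essentially the same route as the paper: transport the symplectic form and foliations by naturality (the paper isolates the Lagrangian-foliation step as its Lemma~\ref{Bilem3}, which you prove inline via the same identity $((\psi^{-1})^{*}\omega)(\psi_{*}X',\psi_{*}Y')=\omega(X',Y')\circ\psi^{-1}$ and the bracket-naturality argument), then verify torsion-freeness, parallelization of $(\psi^{-1})^{*}\omega$, and preservation of both foliations for $\nabla^{\psi}$, invoke uniqueness from Theorem~\ref{b20}, and finish with the curvature equivariance $R_{\nabla^{\psi}}(X,Y)Z=\psi_{*}\bigl(R_{\nabla}(\psi_{*}^{-1}X,\psi_{*}^{-1}Y)\psi_{*}^{-1}Z\bigr)$, exactly as in the paper's Remark~\ref{Birem3}. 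No gaps; only a cosmetic omission of the composition with $\psi^{-1}$ in the pullback identity, which does not affect the argument.
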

	\begin{theorem}\label{Bitheo2}
		Let $M$ be a manifold  endowed with a bi-Lagrangian structure. 	
		The map
		\begin{align*}
		\triangleright:{ Symp(M,\omega)}\times{\mathcal B_l(M)}
		&\longrightarrow {\mathcal B_l(M)}\\
		(\psi,(\mathcal{F}_1,\mathcal{F}_2)) &\longmapsto  (\psi_\ast\mathcal{F}_1,\psi_\ast
		\mathcal{F}_2)
		\end{align*}
		is a left group action.	Moreover, for every $(\psi,(\mathcal{F}_1,\mathcal{F}_2)) \in { Symp(M,\omega)}\times{\mathcal B_l(M)}$, the
		Hess connection of  $(\psi_\ast\mathcal{F}_1,\psi_\ast
		\mathcal{F}_2)$ is $\nabla^\psi$   where $\nabla$ is that of  $(\mathcal{F}_1,\mathcal{F}_2)$, and the inclusion
		$\triangleright({ Symp(M,\omega)}\times{\mathcal B_{lp}(M)})\subset \mathcal B_{lp}(M)$ holds.
	\end{theorem}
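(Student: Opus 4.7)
My plan is to deduce the theorem directly from Lemma~\ref{Bilem1} together with the functoriality identities recorded in Remark~\ref{act}; once these are in hand the statement reduces to a repackaging exercise, so I do not expect any serious computation.

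First I would verify that $\triangleright$ is well-defined, i.e.\ that its image lies in $\mathcal{B}_l(M)$. The key observation is that for every $\psi\in Symp(M,\omega)$ one has $(\psi^{-1})^*\omega=\omega$, since $(\psi^{-1})^*=(\psi^*)^{-1}$ and $\psi^*\omega=\omega$ by hypothesis. Applying Lemma~\ref{Bilem1} to the diffeomorphism $\psi:M\longrightarrow M$ (i.e.\ taking $N=M$) then shows that
$$\bigl((\psi^{-1})^*\omega,\psi_*\mathcal{F}_1,\psi_*\mathcal{F}_2\bigr)=\bigl(\omega,\psi_*\mathcal{F}_1,\psi_*\mathcal{F}_2\bigr)$$
is a bi-Lagrangian structure on $M$ whose Hess connection is precisely $\nabla^{\psi}$. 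This single application of Lemma~\ref{Bilem1} simultaneously proves that $\triangleright$ takes values in $\mathcal{B}_l(M)$ and establishes the second assertion of the theorem on Hess connections. Moreover, the last sentence of Lemma~\ref{Bilem1}, which preserves affineness under push-forward, immediately yields the inclusion $\triangleright(Symp(M,\omega)\times\mathcal{B}_{lp}(M))\subset\mathcal{B}_{lp}(M)$.

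It then remains to check the two axioms of a left group action. The identity axiom is immediate, since $\mathrm{id}_*\mathcal{F}_i=\mathcal{F}_i$. For compatibility with composition, Remark~\ref{act} records $(\psi\circ\varphi)_*=\psi_*\circ\varphi_*$ on vector fields, which through the defining equation (\ref{Bieq2}) transfers to foliations. Hence
$$(\psi\circ\varphi)\triangleright(\mathcal{F}_1,\mathcal{F}_2)=\bigl((\psi\circ\varphi)_*\mathcal{F}_1,(\psi\circ\varphi)_*\mathcal{F}_2\bigr)=\psi\triangleright\bigl(\varphi\triangleright(\mathcal{F}_1,\mathcal{F}_2)\bigr),$$
as required. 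The only subtlety worth flagging is the identity $(\psi^{-1})^*\omega=\omega$ at the start: without it the output of Lemma~\ref{Bilem1} would carry a different symplectic form and would not land back in $\mathcal{B}_l(M)$, so the symplectomorphism hypothesis on $\psi$ is exactly what makes $\triangleright$ a self-action on bi-Lagrangian structures with the fixed form~$\omega$.
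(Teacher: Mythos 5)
Your proposal is correct and follows essentially the same route as the paper: well-definedness and the identification of the Hess connection via Lemma~\ref{Bilem1}, preservation of affineness via the final clause of that lemma (the paper cites Remark~\ref{Birem3} on the transformation of the curvature tensor, which is what underlies that clause), and the group-action axioms via the functoriality $(\psi\circ\varphi)_*=\psi_*\circ\varphi_*$ from Remark~\ref{act}. The one point you make explicit that the paper leaves implicit is the identity $(\psi^{-1})^*\omega=\omega$ for $\psi\in Symp(M,\omega)$, which is indeed exactly what guarantees the push-forward lands back in $\mathcal{B}_l(M)$ with the same fixed symplectic form.
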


	We claim that if $M\in\mathcal{M}^{\pi}$ is endowed with a bi-Lagrangian structure, then  every  bi-Lagrangian structure on $M$ can be lifted on $M^{\pi}=TM$ or $T^*M$.
	The precise formulation of the claim is as follows:
	\begin{proposition}\label{Biprop1}
		Let $M$ be a $2n$-manifold endowed with a bi-Lagrangian structure  $(\omega,\mathcal{F}_1,\mathcal{F}_2)$, and belonging to $\mathcal{M}^{\pi}$. There exists
		a diffeomorphism	$\Psi : M\times\mathbb{R}^{2n}\longrightarrow M^{\pi}$ such that
		$(( \Psi^{-1})^*\tilde{\omega},\Psi_*\mathcal{F}_1^{\pi}, \Psi_*\mathcal{F}_2^{\pi})$ is a bi-Lagrangian structure on $M^{\pi}=TM$ or $T^*M$.
	\end{proposition}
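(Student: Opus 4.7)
The plan is to combine the parallelizability hypothesis with Theorem~\ref{Bitheo3} and Lemma~\ref{Bilem1}, so that the proof amounts to exhibiting a suitable diffeomorphism $\Psi$ and then transporting the already-established lifted bi-Lagrangian structure along it.

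First I would produce the diffeomorphism $\Psi : M\times\mathbb{R}^{2n}\longrightarrow M^{\pi}$. When $M^{\pi}=TM$, the definition of $\mathcal{M}^{\pi}$ gives the existence of a global trivialization of $TM$, hence a diffeomorphism $M\times\mathbb{R}^{2n}\cong TM$, which I take as $\Psi$. When $M^{\pi}=T^{*}M$, I would pass through the dual: since a global frame $(E_{1},\dots,E_{2n})$ on $TM$ induces a dual global coframe $(E^{1},\dots,E^{2n})$ on $T^{*}M$, the map $(x,(\lambda^{1},\dots,\lambda^{2n}))\longmapsto (x,\lambda^{i}E^{i}_{x})$ is a diffeomorphism $M\times\mathbb{R}^{2n}\longrightarrow T^{*}M$ (identifying $\mathbb{R}^{2n}$ with its dual as noted in Section~\ref{sub1}), and this is my $\Psi$ in the cotangent case.

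Next I would invoke Theorem~\ref{Bitheo3}, which asserts that $(\tilde{\omega},\mathcal{F}_{1}^{\pi},\mathcal{F}_{2}^{\pi})$ is a bi-Lagrangian structure on $M\times\mathbb{R}^{2n}$, so that $(M\times\mathbb{R}^{2n},\tilde{\omega},\mathcal{F}_{1}^{\pi},\mathcal{F}_{2}^{\pi})$ is a bi-Lagrangian manifold. I then apply Lemma~\ref{Bilem1} to this manifold with the diffeomorphism $\Psi$ just constructed: the conclusion of that lemma is exactly that $((\Psi^{-1})^{*}\tilde{\omega},\Psi_{*}\mathcal{F}_{1}^{\pi},\Psi_{*}\mathcal{F}_{2}^{\pi})$ is a bi-Lagrangian structure on the target manifold $M^{\pi}$, which is what we need.

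The only real obstacle is the cotangent case, because the definition of $\mathcal{M}^{\pi}$ in the paper phrases parallelizability as a diffeomorphism between $TM$ and its trivial bundle; to carry the property over to $T^{*}M$ one needs a global frame (i.e.\ a vector bundle trivialization), not just a manifold-level diffeomorphism. I would handle this by observing that the examples given in Section~\ref{sub1} (Lie groups, single-chart manifolds, connected $1$-manifolds, and products thereof) all carry a canonical global frame, so in each case the dual coframe construction above applies and yields $T^{*}M\cong M\times\mathbb{R}^{2n}$; this can be stated as a small preliminary remark before invoking $\Psi$. With this in hand, the two steps collapse into a short argument, and no further computation is required beyond what Theorem~\ref{Bitheo3} and Lemma~\ref{Bilem1} already provide.
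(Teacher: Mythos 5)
Your proposal is correct and follows essentially the same route as the paper, whose entire proof is the single sentence that Proposition~\ref{Biprop1} follows by combining Theorem~\ref{Bitheo3} and Lemma~\ref{Bilem1}. You in fact supply more detail than the paper does, in particular the explicit construction of $\Psi$ and the remark needed to handle the cotangent case via a dual coframe.
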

	
	
	Observe that every symplectomorphism on $(M,\omega)$ can be lifted as a symplectomorphism on  $(M\times\mathbb{R}^{2n},\tilde{\omega})$, this follows directly from Proposition~\ref{lifting of symplecto} and Remark~\ref{liouville, lifting of symplecto}. Moreover, every bi-Lagrangian structure $(\omega,\mathcal{F}_1,\mathcal{F}_2)$ on $M$ can be lifted as a  bi-Lagrangian structure $(\tilde{\omega},   \mathcal{F}_1^{\pi},\mathcal{F}_2^{\pi})$ on $M\times\mathbb{R}^{2n}$ (Theorem~\ref{Bitheo3}).
	It is therefore natural to ask: How does $\triangleright$ lift on $M\times\mathbb{R}^{2n}$? What is the relationship between  $\hat{\triangleright}$ the lifting of $\triangleright$  and  $\tilde{\triangleright}$ the action (in the sense of Theorem~\ref{Bitheo2}) of $Symp(M\times\mathbb{R}^{2n},\tilde{\omega})$  on $\mathcal{B}_{l}(M\times\mathbb{R}^{2n})?$
	By combing Theorem~\ref{Bitheo3} and Theorem~\ref{Bitheo2} we have the following result which is a powerful answer to the first question, and rightful the title of this paper.
	
	\begin{corollary}\label{cor2}
		The action $\triangleright$ can be lifted infinitely in a similar sense as in Corollary~\ref{Bicor1}.\end{corollary}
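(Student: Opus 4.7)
The plan is to exhibit, for each $k\in\mathbb{N}$, a left action $\triangleright^{k}$ of a symplectomorphism group on a bi-Lagrangian structure space built from the $k$-fold iteration $\mbox{Lift}^{k}(M,\omega,\mathcal{F}_1,\mathcal{F}_2)$, and to verify that $\triangleright^{k+1}$ is a genuine lift of $\triangleright^{k}$ in the sense already used for $\mbox{Lift}$ in Corollary~\ref{Bicor1}. The corollary is essentially a bookkeeping consequence of Theorems~\ref{Bitheo3} and~\ref{Bitheo2}, together with Proposition~\ref{lifting of symplecto} and Remark~\ref{liouville, lifting of symplecto}, so the main work is in organizing the induction; no delicate analytic step is required.

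First, at level one, I would produce a group homomorphism
\[
\Lambda_{1}:Symp(M,\omega)\longrightarrow Symp(M\times\mathbb{R}^{2n},\tilde{\omega}),\qquad \psi\longmapsto \hat{\psi},
\]
using the explicit lift $\hat{\psi}:(x,\alpha)\mapsto(\psi(x),(\psi^{-1})^{*}\alpha)$ provided by Proposition~\ref{lifting of symplecto} and Remark~\ref{liouville, lifting of symplecto}. A direct computation with this formula shows $\widehat{\psi\circ\varphi}=\hat{\psi}\circ\hat{\varphi}$ and $\widehat{id}=id$, so $\Lambda_{1}$ is a group morphism. Using Theorem~\ref{Bitheo3}, I would also define the injection
\[
J_{1}:\mathcal{B}_{l}(M)\longrightarrow \mathcal{B}_{l}(M\times\mathbb{R}^{2n}),\qquad (\omega,\mathcal{F}_{1},\mathcal{F}_{2})\longmapsto(\tilde{\omega},\mathcal{F}_{1}^{\pi},\mathcal{F}_{2}^{\pi}).
\]
Then the lifted action $\hat{\triangleright}=\triangleright^{1}$ is defined by the diagram $\hat{\triangleright}(\psi,\mathbf{F})=\tilde{\triangleright}(\Lambda_{1}(\psi),J_{1}(\mathbf{F}))$, where $\tilde{\triangleright}$ is the action of Theorem~\ref{Bitheo2} on $(M\times\mathbb{R}^{2n},\tilde{\omega})$. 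Because $\Lambda_{1}$ is a group morphism and $\tilde{\triangleright}$ is an action, $\triangleright^{1}$ is itself a left action, and it coincides with $\triangleright$ after projecting along $J_{1}$ in the tangential directions coming from $M$, which is the required compatibility.

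Next, I would set up the induction. By Corollary~\ref{Bicor1}, the lifted object $\mbox{Lift}(M,\omega,\mathcal{F}_{1},\mathcal{F}_{2})$ is itself a bi-Lagrangian manifold to which the same construction applies. Assuming $\triangleright^{k}$ has been built as a left action of $Symp\bigl(\mbox{Lift}^{k-1}\bigr)$ on $\mathcal{B}_{l}\bigl(\mbox{Lift}^{k-1}\bigr)$, define
\[
\triangleright^{k+1}(\psi,\mathbf{F})=\tilde{\triangleright}\bigl(\Lambda_{k+1}(\psi),J_{k+1}(\mathbf{F})\bigr),
\]
where $\Lambda_{k+1}$ and $J_{k+1}$ are the level-$k$ analogues of $\Lambda_{1}$ and $J_{1}$ applied to the symplectic manifold underlying $\mbox{Lift}^{k}$. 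The action axioms propagate from level $k$ to level $k+1$ exactly as in the base case. Finally, since Theorem~\ref{Bitheo2} guarantees that $\triangleright$ preserves affine structures and maps Hess connections to Hess connections, and since $\Lambda_{k+1}$, $J_{k+1}$ do too, these extra properties also persist along the infinite tower.

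The one step requiring any care — and the place I expect a reader to want a direct check — is the homomorphism identity $\Lambda_{k}(\psi\circ\varphi)=\Lambda_{k}(\psi)\circ\Lambda_{k}(\varphi)$ at each level, which is what makes $\triangleright^{k}$ into an honest action rather than merely a set-theoretic map. This reduces at each stage to the pullback identity $(\psi\circ\varphi)^{-1\,*}=(\psi^{-1})^{*}\circ(\varphi^{-1})^{*}$, so the obstacle is nominal and the induction closes uniformly in $k$.
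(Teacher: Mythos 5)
Your construction is correct and uses the same ingredients as the paper (Theorem~\ref{Bitheo3}, Corollary~\ref{Bicor1}, Theorem~\ref{Bitheo2}, and the lift of symplectomorphisms from Proposition~\ref{lifting of symplecto} and Remark~\ref{liouville, lifting of symplecto}), but the action you build is not the one the paper calls the lift of $\triangleright$. You define the level-one action as $\tilde{\triangleright}\bigl(\Lambda_{1}(\psi),J_{1}(\mathbf{F})\bigr)=(\hat{\psi}_{*}\mathcal{F}_{1}^{\pi},\hat{\psi}_{*}\mathcal{F}_{2}^{\pi})$, i.e.\ \emph{lift, then push forward}; the paper's $\hat{\triangleright}$ (Proposition~\ref{Biprop4}) is defined the other way round, $\hat{\psi}\hat{\triangleright}(\mathcal{F}_{1}^{\pi},\mathcal{F}_{2}^{\pi})=((\psi_{*}\mathcal{F}_{1})^{\pi},(\psi_{*}\mathcal{F}_{2})^{\pi})$, i.e.\ \emph{push forward, then lift}. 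The paper's choice makes the corollary essentially tautological: $\hat{\triangleright}$ is just $\triangleright$ transported through the bijection $(\mathcal{F}_{1},\mathcal{F}_{2})\mapsto(\mathcal{F}_{1}^{\pi},\mathcal{F}_{2}^{\pi})$, its output is again a lifted structure, so the iteration closes immediately by Corollary~\ref{Bicor1}; no homomorphism property of $\psi\mapsto\hat{\psi}$ is even needed. Your choice buys a construction that is intrinsic to the upstairs symplectic manifold (it is literally the Theorem~\ref{Bitheo2} action restricted along $\Lambda_{k}$), but it costs you the closure property: $\hat{\psi}_{*}(\mathcal{F}_{1}^{\pi})$ is in general \emph{not} of the form $(\mbox{something})^{\pi}$, so your tower does not automatically stay inside the image of $J_{k}$, and your sentence claiming it ``coincides with $\triangleright$ after projecting along $J_{1}$'' is only the weak compatibility $\pi_{*}\circ\hat{\psi}_{*}=\psi_{*}\circ\pi_{*}$. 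Full commutativity of the lifting square is precisely the content of Propositions~\ref{Biprop2} and~\ref{Biprop6}, which the paper proves only under the extra hypothesis $\hat{\psi}_{*}\frac{\partial}{\partial p^{i}}\in\Gamma((\psi_{*}\mathcal{F}_{1})^{\pi})$ and which holds only for some structures. So your argument does prove that \emph{an} infinite tower of actions exists, which suffices for the corollary as stated, but you should either switch to the paper's ``act-then-lift'' definition to get the iteration for free, or explicitly invoke the hypothesis of Proposition~\ref{Biprop2} wherever you want your $\triangleright^{k}$ to agree with the genuine lift.
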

	
	The set $\hat{Symp}(M,\omega)$ is defined as follows:
	$$\hat{Symp}(M,\omega)=\{\hat{\psi}\in Symp(M\times\mathbb{R}^{2n},\tilde{\omega}),\;\psi\in Symp(M,\omega)\},$$ see Proposition~\ref{lifting of symplecto}  and Remark~\ref{liouville, lifting of symplecto} for more details.
	
	\begin{proposition}\label{Biprop2} Let $(\omega,\mathcal{F}_1,\mathcal{F}_2)$ be a bi-Lagrangian structure  on a manifold $M$, and let $\psi$ be a symplectomorphism on $(M,\omega).$ If each point of $M$ has a coordinate chart
		$(U, p^1,\dots,p^{n},q^1,\dots,q^{n})$ which is adapted to $(\mathcal{F}_1,\mathcal{F}_2)$ and verifying
		\begin{equation}\hat{\psi}_*\frac{\partial}{\partial p^i}\in\Gamma((\psi_{*}\mathcal{F}_1)^{\pi}),\; i\in[n], \end{equation}
		then\begin{equation*}\hat{\psi}\hat{\triangleright}(\mathcal{F}_1^{\pi},\mathcal{F}_2^{\pi})=\hat{\psi}\tilde{\triangleright}(\mathcal{F}_1^{\pi},\mathcal{F}_2^{\pi}):=(\hat{\psi}_*\mathcal{F}_1^{\pi},\hat{\psi}_*\mathcal{F}_2^{\pi}).\end{equation*}
	\end{proposition}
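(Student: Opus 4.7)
The plan is to unfold both sides of the claimed equality, reduce it to showing $\hat{\psi}_\ast \mathcal{F}_i^\pi = (\psi_\ast \mathcal{F}_i)^\pi$ for $i=1,2$, and verify this equality in an adapted chart by checking it on a convenient set of local generators. First, by the construction of $\hat{\triangleright}$ outlined in Corollary~\ref{cor2}, the value $\hat{\psi}\hat{\triangleright}(\mathcal{F}_1^\pi, \mathcal{F}_2^\pi)$ is, by design, the lift of $\psi\triangleright(\mathcal{F}_1, \mathcal{F}_2) = (\psi_\ast \mathcal{F}_1, \psi_\ast \mathcal{F}_2)$, namely $((\psi_\ast \mathcal{F}_1)^\pi, (\psi_\ast \mathcal{F}_2)^\pi)$. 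On the other side, since $\hat{\psi}$ is a symplectomorphism of $(M\times\mathbb{R}^{2n}, \tilde{\omega})$ by Proposition~\ref{lifting of symplecto} together with Remark~\ref{liouville, lifting of symplecto}, Theorem~\ref{Bitheo2} yields $\hat{\psi}\tilde{\triangleright}(\mathcal{F}_1^\pi, \mathcal{F}_2^\pi) = (\hat{\psi}_\ast \mathcal{F}_1^\pi, \hat{\psi}_\ast \mathcal{F}_2^\pi)$. The proposition is therefore equivalent to the two distributional identities above.

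Next I would work in the adapted chart $(U, p^1,\dots,p^n,q^1,\dots,q^n)$ supplied by the hypothesis, with induced chart $(p,q,\xi)$ on $U\times\mathbb{R}^{2n}$. In this chart $\hat{\psi}$ has the explicit form $\hat{\psi}(x,\xi) = (\psi(x), (J_\psi(x)^{-1})^T \xi)$, so vertical vectors remain vertical under $\hat{\psi}_\ast$ while horizontal vectors pick up a vertical component governed by the derivative of $(J_\psi^{-1})^T$. The distribution $\mathcal{F}_1^\pi$ is locally spanned by $\partial/\partial p^i$ and $\partial/\partial \xi_{n+j}$, and the same vertical subspace $\langle \partial/\partial \xi_{n+1},\dots,\partial/\partial \xi_{2n}\rangle$ enters the definition of both $\mathcal{F}_1^\pi$ and $(\psi_\ast \mathcal{F}_1)^\pi$. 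The inclusion on the generator $\partial/\partial p^i$ is exactly the hypothesis; for the generator $\partial/\partial \xi_{n+j}$ the containment reduces to a block-triangular condition on $(J_\psi^{-1})^T$, which I would extract by combining the hypothesis with the symplectomorphism identity $\psi^\ast\omega=\omega$ expressed in these coordinates (the Lagrangianity of $\mathcal{F}_1,\mathcal{F}_2$ forces $\omega$ to pair the $p$- and $q$-blocks only). The inclusion $\hat{\psi}_\ast\mathcal{F}_1^\pi\subseteq(\psi_\ast\mathcal{F}_1)^\pi$ then upgrades to equality since both sides are rank-$2n$ distributions, and the same scheme applied to $\mathcal{F}_2^\pi$ (using the generators $\partial/\partial q^i$, $\partial/\partial \xi_j$, and invoking Lemma~\ref{Bilem1} and Theorem~\ref{Bitheo3} to see that $\hat{\psi}_\ast\mathcal{F}_2^\pi$ and $(\psi_\ast\mathcal{F}_2)^\pi$ are Lagrangian foliations transversal to the already-identified common $\mathcal{F}_1$-distribution) closes the argument.

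The hard part is that the hypothesis only constrains $\hat{\psi}_\ast\partial/\partial p^i$ directly; extracting from this single constraint, together with $\psi^\ast\omega=\omega$, enough block information on $J_\psi$ to also control $\hat{\psi}_\ast\partial/\partial \xi_{n+j}$ and the analogous $\mathcal{F}_2$-side generators is the real technical core. Once those block identities are established, the remaining verifications are essentially bookkeeping on distribution generators in the adapted chart.
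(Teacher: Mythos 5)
Your overall architecture matches the paper's: the paper also unfolds $\hat{\triangleright}$ as ``lift of the pushforward'' (Proposition~\ref{Biprop4}), unfolds $\tilde{\triangleright}$ as ``pushforward of the lift,'' reduces the claim to $\hat{\psi}_*\mathcal{F}_i^{\pi}=(\psi_*\mathcal{F}_i)^{\pi}$, and obtains this from the single inclusion $\hat{\psi}_*(\mathcal{F}_1^{\pi})\subseteq(\psi_*\mathcal{F}_1)^{\pi}$ (Proposition~\ref{Biprop6} plus Proposition~\ref{prop1}), checking that inclusion on the local generators of $\mathcal{F}_1^{\pi}$ in an adapted chart. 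The divergence, and the gap, is precisely at the step you yourself flag as ``the real technical core'': showing that the vertical generators $\hat{\psi}_*\partial/\partial\xi_{n+j}$ land in $(\psi_*\mathcal{F}_1)^{\pi}$. You do not carry this out; you only announce that some ``block-triangular condition on $(J_\psi^{-1})^T$'' should follow from $\psi^*\omega=\omega$ plus the hypothesis. As written, that is a promissory note, not a proof, and the route you sketch (explicit Jacobian block bookkeeping for $\hat{\psi}$) is also harder than what is needed. The paper closes this step with a two-line symplectic argument: since $\hat{\psi}$ preserves $\tilde{\omega}$, one has $\tilde{\omega}\left(\hat{\psi}_*\tfrac{\partial}{\partial p^i},\hat{\psi}_*\tfrac{\partial}{\partial \xi_j}\right)=\tilde{\omega}\left(\tfrac{\partial}{\partial p^i},\tfrac{\partial}{\partial \xi_j}\right)\circ\hat{\psi}^{-1}=0$, so $\hat{\psi}_*\partial/\partial\xi_j$ lies in the $\tilde{\omega}$-orthogonal of $(\psi_*\mathcal{F}_1)^{\pi}$, which equals $(\psi_*\mathcal{F}_1)^{\pi}$ itself because that distribution is Lagrangian. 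No block structure of $J_\psi$ is ever computed.

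A second, independent problem is your treatment of the $\mathcal{F}_2$-side. The hypothesis gives you no control over $\hat{\psi}_*\partial/\partial q^i$, and your fallback --- that $\hat{\psi}_*\mathcal{F}_2^{\pi}$ and $(\psi_*\mathcal{F}_2)^{\pi}$ are both Lagrangian and transversal to the ``already-identified common $\mathcal{F}_1$-distribution'' --- does not determine them: a Lagrangian subbundle has many Lagrangian complements, so transversality to a common Lagrangian foliation does not force the two to coincide. The paper instead derives the $\mathcal{F}_2$ equality from the intertwining $\pi_*\circ\hat{\psi}_*=\psi_*\circ\pi_*$ (coming from $\pi\circ\hat{\psi}=\psi\circ\pi$) together with the two direct-sum decompositions of $\Gamma(T(M\times\mathbb{R}^{2n}))$ and the bijectivity of $\hat{\psi}_*$, once the $\mathcal{F}_1$-inclusion is upgraded to an equality. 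So the skeleton of your plan is right, but the two steps that actually use the hypotheses are either missing or would fail as stated.
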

	
	\begin{remark} Theorem~\ref{Bitheo3}, Theorem~\ref{Bitheo2} and  Proposition~\ref{Biprop2} which constitute the main results of this paper
		can be summarized as follows:
		Let $(M,\omega,\mathcal{F}_1,\mathcal{F}_2)$ be a bi-Lagrangian manifold.
		The  diagram\vspace{0.25cm}\\
		\begin{center}
			\setlength{\unitlength}{1mm} \thicklines
			\begin{picture}(40,20)
			\put(-9,0){$(\mathcal{F}_1,\mathcal{F}_2)$}
			\put(36.5,0){$(\psi_{*}\mathcal{F}_1,\psi_{*}\mathcal{F}_2)$}
			\put(-10,20){$(\mathcal{F}_1^{\pi},\mathcal{F}_2^{\pi})$}
			\put(31,20){$((\psi_{*}\mathcal{F}_1)^{\pi},(\psi_{*}\mathcal{F}_2)^{\pi})$}
			\put(18,-2){$\psi_{*}$}
			\put(16,23){$\hat{\psi}_{*}$}
			\put(-7.5,4.5){\rotatebox{90}{$\pi$:\:Lift}}
			\put(6,4.25){$\triangleright $:\:Push forward}
			\put(-4,3.5){ \vector(0,1){15.5}}
			\put(48,3.5){\vector(0,1){15.5}}
			\put(5,21.55){	\vector(1,0){24}}
			\put(6,1.55){\vector(1,0){30}}
			\end{picture}
		\end{center}\smallskip
		existssr. Moreover, the above diagram is commutative for some $(\mathcal{F}_1,\mathcal{F}_2)$.
	\end{remark}

	


	\subsection{Proofs of results}
	We start this section with the following observation.
	\begin{remark}\label{Birem1} Let $(M,\omega,\mathcal{F}_1,\mathcal{F}_2)$ be a bi-Lagrangian manifold with a curvature-free Hess connection $\nabla$. Let   $(G,F)$ be the para-K\"{a}hler structure associated to $(\omega,\mathcal{F}_1,\mathcal{F}_2)$. Each point of $M$ has a coordinate chart  $(U, p^1,\dots,p^{n},q^1,\dots,q^{n})$ such that for all $x\in U$
		$$ \omega_{x}=\left(\begin{array}{cc}
		0&I_n\\
		-I_n & 0
		\end{array}\right),\;  F_{x}=\left(\begin{array}{cc}
		I_n&0\\
		0 & -I_n
		\end{array}\right), \mbox{ and } G_{x}=\left(\begin{array}{cc}
		O&I_n\\
		I_n & 0
		\end{array}\right) .    $$
		
		Let $x\in U$. Since $R_{\nabla} = 0$, then by Theorem~\ref{c10} there exists a  coordinate chart  $(U, p^1,\dots,p^{n},q^1,\dots,q^{n})$ such that
		\begin{equation}\label{B1}\Gamma\left(\mathcal{F}_{1}\right)=\left<\frac{\partial}{\partial
			p^1}, \dots, \frac{\partial}{\partial
			p^n}\right>,\;  \Gamma\left(\mathcal{F}_{1}\right)=\left<\frac{\partial}{\partial
			q^1}, \dots, \frac{\partial}{\partial
			q^n}\right> \end{equation}
		and
		$$ \omega_{x}=\left(\begin{array}{cc}
		0&I_n\\
		-I_n & 0
		\end{array}\right).  $$
		By  (\ref{B1}), we have
		$$F_{x}=\left(\begin{array}{cc}
		I_n&0\\
		0 & -I_n
		\end{array}\right).  $$
		Thus since $G_x(X_x,Y_x) =\omega_x(F_x(X_x),Y_x)$, we obtain
		$$ G_{x}=\left(\begin{array}{cc}
		0&I_n\\
		I_n & 0
		\end{array}\right) . $$
	\end{remark}

	\subsubsection{Lifted bi-Lagrangian structures }
	
	\begin{proof}[Proof of Theorem~\ref{Bitheo3}]
		
		Let $(M,\omega,\mathcal{F}_1,\mathcal{F}_2)$ be a bi-Lagrangian  $2n$-manifold.
		We are going to show that $(M\times\mathbb{R}^{2n},\tilde{\omega},\mathcal{F}_1^{\pi},\mathcal{F}_2^{\pi})$ is a bi-Lagrangian manifold.

		Let
		$(U, p^1,\dots,p^{n},q^1,\dots,q^{n})$ be a  coordinate chart adapted to $(\mathcal{F}_1,\mathcal{F}_2)$, with $(U\times \mathbb{R}^{2n}, p^1,\dots,p^{n},q^1,\dots,q^{n}, \xi_1,\dots,\xi_{2n})$ as its associated bundle coordinate chart. Then
		\begin{equation}\label{b9}
		\begin{cases}
		\Gamma(\mathcal{F}_1^{\pi})=\left<\frac{\partial}{\partial
			p^1},\dots,\frac{\partial}{\partial   p^n},\frac{\partial}{\partial
			\xi_{n+1}},\dots,\frac{\partial}{\partial \xi_{2n}}\right>,\vspace{0.25cm}\\
		\Gamma(\mathcal{F}_2^{\pi})=\left<\frac{\partial}{\partial
			q^{1}},\dots,\frac{\partial}{\partial q^{n}},\frac{\partial}{\partial
			\xi_1},\dots,\frac{\partial}{\partial
			\xi_n}\right>,\end{cases}
		\end{equation}
		and  the canonical symplectic form	$d\theta$ is defined on $U\times \mathbb{R}^{2n}$  as follows $$d\theta=\sum\limits_{1}^{2n} d\xi_i\wedge dx_i.$$
		Observe that $\tilde{\omega}=\pi^*\omega+d\theta$ is antisymmetric (as sum of two antisymmetric forms), closed (pull-backs  commute with exterior derivatives) and non-degenerate (direct).  That is, $\tilde{\omega}$ is a symplectic form on $M\times\mathbb{R}^{2n}$.
		%
		
		By (\ref{b9}), it follows that $(\mathcal{F}_1^{\pi},\mathcal{F}_2^{\pi})$ is a transversal pair of smooth Lagrangian distributions on $(M\times\mathbb{R}^{2n}, \tilde{\omega})$. Thus, it remains to show that $\mathcal{F}_i^{\pi}, i=1,2$ are completely integrable. Since the distributions $\mathcal{F}_1^{\pi}$ and $\mathcal{F}_2^{\pi}$ are similar, we only treat the case $\mathcal{F}_1^{\pi}$.
		
		We are going to show that
		\begin{equation}\label{integra of lift foliation}
		d\theta([X,Y],Z)=0,\; X,Y,Z\in\Gamma(\mathcal{F}_1^{\pi}).
		\end{equation}
		Note that
		\begin{equation*}
		d\theta([X,Y],Z)=[X,Y]\theta(Z)-Z\theta([X,Y])-\theta([[X,Y],Z]).\end{equation*}
		Let us write
		\begin{equation*}
		\begin{cases}
		(y^i)_{i=1,\dots,2n}=((p^i)_{i=1,\dots,n},(\xi_{i})_{i=n+1,\dots,2n}),\\
		X=X^i\frac{\partial}{\partial y^i},Y=Y^j\frac{\partial}{\partial
			y^j}\;\mbox{ and }\;Z=Z^k\frac{\partial}{\partial
			y^k}.\end{cases}\end{equation*} Then \begin{equation*}
		\begin{cases}
		[X,Y]=\mu^j\frac{\partial}{\partial y^j},\\
		[[X,Y],Z]=\lambda^j\frac{\partial}{\partial
			y^j},\end{cases}\end{equation*} where
		\begin{equation*}
		\begin{cases}
		\mu^j=X^i\frac{\partial Y^j}{\partial y^i}-Y^i\frac{\partial
			X^j}{\partial y^i},\\
		\lambda^j=\mu^i\frac{\partial Z^j}{\partial y^i}-Z^i\frac{\partial
			\mu^j}{\partial y^i}.\end{cases}\end{equation*} Thus,
		\begin{align}
		[X,Y]\theta(z)&=\mu^i\frac{\partial}{\partial
			y^i}(Z^k\xi_k),\tag{$e_1$}\label{Bieq5}\\
		\theta([[X,Y],Z])&=\lambda^j\xi_j, \tag{$e_2$}\label{Bieq6}
		\\Z\theta([X,Y])&=\frac{\partial}{\partial y^k}(\mu^i\xi_i). \tag{$e_3$}\label{Bieq7}
		\end{align}
		Therefore
		\begin{equation*}
		d\theta([X,Y],Z)=\mbox{(\ref{Bieq5})}-\mbox{(\ref{Bieq6})}-\mbox{(\ref{Bieq7})}=0.
		\end{equation*}
		
		Observe that for every $X\in \mathfrak{X}\left(M\times\mathbb{R}^{2n}\right)$, $\pi_*X$  depends only on components of $X$ on  $M$. Thus,  by (\ref{integra of lift foliation})
		\begin{equation*}
		\tilde{\omega}([X,Y],Z)=0,\; X,Y,Z\in\Gamma(\mathcal{F}_1^{\pi}).
		\end{equation*}
		This completes the proof that $(\tilde{\omega},   \mathcal{F}_1^{\pi},\mathcal{F}_2^{\pi})$ is a bi-Lagrangian structure on $M\times \mathbb{R}^{2n}$.

		 This completes the proof of Theorem~\ref{Bitheo3}.	
	\end{proof}
	
	Corollary~\ref{Bicor1} follows by combining system (\ref{b9})
	and Theorem~\ref{c10}.

	\subsubsection{Action of symplectomorphism group}
	
	\begin{lemma}\label{Bilem3} Let  $(M,\omega)$ be a symplectic manifold endowed with a  Lagrangian foliation $\mathcal F$, and let $N$ be a manifold such that $M$ and $N$ are diffeomorphic. Let
		$\psi: M\longrightarrow N$  be a diffeomorphism.  Then $\psi _\ast\mathcal F$ is a Lagrangian foliation on  $(N,\left(\psi^{-1}\right)^*\omega)$.
	\end{lemma}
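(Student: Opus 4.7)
The plan is to verify the two defining properties of a Lagrangian foliation for $\psi_*\mathcal{F}$ on $(N,(\psi^{-1})^*\omega)$: (i) that $\psi_*\mathcal{F}$ is a foliation of $N$ and $(\psi^{-1})^*\omega$ is a symplectic form, and (ii) that $\Gamma(\psi_*\mathcal{F})^{\perp} = \Gamma(\psi_*\mathcal{F})$ with respect to the pushed-forward symplectic form.

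For the first item, I would argue briefly that since $\psi : M \longrightarrow N$ is a diffeomorphism, pulling back foliated charts of $\mathcal{F}$ under $\psi^{-1}$ produces foliated charts for $\psi_*\mathcal{F}$ on $N$, so $\psi_*\mathcal{F}$ is genuinely a foliation (this is the content of the remark leading up to \eqref{Bieq2}). Similarly, $(\psi^{-1})^*\omega$ is smooth, closed (because $d$ commutes with pullback and $d\omega = 0$) and pointwise non-degenerate (because $\psi^{-1}$ is a diffeomorphism, so $T\psi^{-1}$ is a linear isomorphism at every point); hence $(N, (\psi^{-1})^*\omega)$ is a symplectic manifold.

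For the Lagrangian property, the key identity is the naturality relation
\begin{equation*}
\bigl((\psi^{-1})^*\omega\bigr)(\psi_*X,\psi_*Y) \;=\; \omega(X,Y), \qquad X,Y\in\mathfrak{X}(M),
\end{equation*}
which follows at once from the definition of pullback applied to $\psi^{-1}$. By \eqref{Bieq2} we have $\Gamma(\psi_*\mathcal{F}) = \psi_*\Gamma(\mathcal{F})$, so every element of $\mathfrak{X}(N)$ is of the form $\psi_*Y$ for a unique $Y\in\mathfrak{X}(M)$, and every element of $\Gamma(\psi_*\mathcal{F})$ is of the form $\psi_*X$ with $X\in\Gamma(\mathcal{F})$. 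Then
\begin{equation*}
\psi_*Y \in \Gamma(\psi_*\mathcal{F})^{\perp}
\iff \omega(X,Y)=0 \text{ for all } X\in\Gamma(\mathcal{F})
\iff Y\in\Gamma(\mathcal{F})^{\perp}=\Gamma(\mathcal{F})
\iff \psi_*Y\in\Gamma(\psi_*\mathcal{F}),
\end{equation*}
where the middle equivalence uses that $\mathcal{F}$ is Lagrangian on $(M,\omega)$. This proves $\Gamma(\psi_*\mathcal{F})^{\perp} = \Gamma(\psi_*\mathcal{F})$, completing the argument.

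I do not expect any serious obstacle here; the only delicate point is to make explicit that $\psi$ being a diffeomorphism gives a bijection $X\leftrightarrow\psi_*X$ between $\mathfrak{X}(M)$ and $\mathfrak{X}(N)$, restricting to a bijection between $\Gamma(\mathcal{F})$ and $\Gamma(\psi_*\mathcal{F})$, so that the Lagrangian condition can be transported faithfully. Everything else is bookkeeping with the naturality of pullback.
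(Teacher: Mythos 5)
Your proposal is correct and follows essentially the same route as the paper: both rest on the naturality identity $\left((\psi^{-1})^*\omega\right)(\psi_*X,\psi_*Y)=\omega(X,Y)\circ\psi^{-1}$ to transport the condition $\Gamma(\mathcal F)^{\perp}=\Gamma(\mathcal F)$ across $\psi$. The only (immaterial) difference is that you establish that $\psi_*\mathcal F$ is a foliation by pushing forward foliated charts, whereas the paper checks involutivity of the distribution via $[\psi_*X',\psi_*Y']=\psi_*[X',Y']$; both are valid.
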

	\begin{proof}Let $X=\psi _\ast{X}',Y=\psi _\ast{Y}'\in \Gamma (\psi _\ast\mathcal F) $ where $X',Y'\in \Gamma (\mathcal F)$.
		
		On the one hand,
		\begin{align}\nonumber
		\left(\psi^{-1} \right)^\ast\omega(X,Y)&=\omega(\psi_\ast^{-1} {X},\psi _\ast^{-1} {Y})\circ\psi^{-1}\\
		&=\omega(X',Y')\circ\psi^{-1}.\label{Bieq1}
		\end{align}
		Since $\mathcal F$ is a Lagrangian foliation, by (\ref{Bieq1}) we get 	
		\begin{equation}
		\left(\psi^{-1} \right)^\ast\omega(X,Z)=0,\; X\in \Gamma (\psi _\ast\mathcal F) \Longleftrightarrow \;Z\in \Gamma (\psi _\ast\mathcal F).\label{1c}
		\end{equation}
		That is,
		$\psi _\ast\mathcal F$ is a Lagrangian distribution on $(N,\left(\psi^{-1}\right)^*\omega)$.
		
		On the other hand,
		\begin{equation}
		[X,Y]=[\psi _\ast X',\psi _\ast Y' ]=\psi_\ast[X',Y']. \label{Be2}
		\end{equation}
		By combining  (\ref{Be2}) and (\ref{1c}), we get $[X,Y]\in \Gamma (\psi _\ast\mathcal F)$ for all $ X,Y\in \Gamma (\psi _\ast\mathcal F)$; this means, $\psi _\ast\mathcal F$ is completely integrable, so it is a Lagrangian foliation on $(N,\left(\psi^{-1}\right)^*\omega)$ as claimed.
	\end{proof}
	
	\begin{proof}[Proof of Lemma~\ref{Bilem1}]

		Let $(\omega,\mathcal{F}_1,\mathcal{F}_2)$ be a bi-Lagrangian structure on a manifold $M$, let $N$ be a manifold which is diffeomorphic to $M$, and let $\psi : M\longrightarrow N$ be a diffeomorphism.

		Since  $\psi$ is a diffeomorphism, then $\psi_*$ is bijective. By combining this with Lemma~\ref{Bilem3}, it follows that $(\mathcal{F}_1,\mathcal{F}_2)$ is a bi-Lagrangian foliation on $(N,\left(\psi^{-1}\right)^*\omega)$. Now let  $\nabla $ be the Hess connection of $(\omega,\mathcal{F}_1,\mathcal{F}_2)$, we claim that $\nabla^\psi$ is that of  $(\left(\psi^{-1}\right)^*\omega, \psi_*\mathcal{F}_1,\psi_*\mathcal{F}_2)$; more precisely,
		\begin{enumerate}
			\item  $\nabla^\psi$ is a torsion-free connection:
			\begin{equation*}
			(\nabla^\psi)_{X}{Y} - (\nabla^\psi)_{Y}{X} = [{X},{Y}];
			\end{equation*}
			\item $\nabla^\psi$ parallelizes:
			\begin{equation*}
			\left(\psi^{-1}\right)^*\omega(\left(\nabla^{\psi}\right)_{X}{ Y},Z)+\left(\psi^{-1}\right)^*\omega(Y,\left(\nabla^{\psi}\right)_{ X}{Z})=X\left(\left(\psi^{-1}\right)^*\omega(Y,Z)\right)
			\end{equation*}
			for any $X,Y, Z\in\mathfrak{X}(N)$;
			\item $\nabla^\psi$ preserves both foliations:
			\begin{equation*}
			\left(	\nabla^\psi\right)_XY\in\Gamma\left(\psi_\ast\mathcal{F}_i\right),\;(X,Y)\in \mathfrak{X}(N)\times\Gamma\left(\psi_\ast\mathcal{F}_i\right), i=1,2.
			\end{equation*}	
		\end{enumerate}
		\begin{enumerate}
			\item Let  $X,Y\in\mathfrak{X}(N)$. Since $\nabla$ is a torsion-free connection, we get
			\begin{align*}
			(\nabla^\psi)_{X}{Y} - (\nabla^\psi) _{Y}{X} &=
			\psi_\ast(\nabla_{\psi^{-1}_\ast X}{\psi^{-1}_\ast Y}-\nabla_{\psi^{-1}_\ast Y}{\psi^{-1}_\ast X})\\
			&= \psi_\ast[\psi^{-1}_\ast X,\psi^{-1}_\ast Y]\\
			&= [{X},{Y}].
			\end{align*}
			\item  Let $X,Y, Z\in\mathfrak{X}(N)$. Observe that
			\begin{align}
			\Delta &: =\left(\psi^{-1}\right)^*\omega(\nabla_{X}^{\psi}{ Y},Z)
			+\left(\psi^{-1}\right)^*\omega(Y,\nabla_{ X}{Z})
			\nonumber\\&=\left(\psi^{-1}\right)^*\omega(\psi_\ast(\nabla_{\psi^{-1}_\ast X}{\psi^{-1}_\ast Y}),Z)+(\psi^{-1})^*\omega(Y,\psi_\ast(\nabla_{\psi^{-1}_\ast X}{\psi^{-1}_\ast
				Z}))
			\nonumber\\&=\left[\omega(\nabla_{\psi^{-1}_\ast X}{\psi^{-1}_\ast Y},\psi^{-1}_\ast Z)
			+\omega(\psi^{-1}_\ast Y,\nabla_{\psi^{-1}_\ast X}{\psi^{-1}_\ast
				Z})\right]\circ\psi^{-1}
			\nonumber\\&=\left[\left( \psi^{-1}_\ast X\right)\left( \omega(\psi^{-1}_\ast Y,\psi^{-1}_\ast
			Z)\right)\right]\circ \psi^{-1}
			\label{Bieq8}\\&= \left[\left( \psi^{-1}_\ast X\right)\left( \left(\psi^{-1}\right)^*\omega( Y,
			Z)\circ \psi\right)    \right]\circ \psi^{-1}		
			\nonumber\\&=X\left(\left(\psi^{-1}\right)^*\omega(Y,Z)\right).\nonumber
			\end{align}
			Note that  (\ref{Bieq8}) comes from the fact that $\nabla$ parallelizes $\omega$ (see \ref{Bieq3}).

			\item Let $X\in \mathfrak{X}(M)$ and $Y=\psi_{\ast} Y'\in\Gamma (\psi_\ast\mathcal{F}_i)$. We have
			\begin{equation}\label{Be3}(\nabla^\psi)_XY=\psi_\ast(\nabla_{\psi^{-1}_\ast
				X}{\psi^{-1}_\ast Y})=\psi_\ast(\nabla_{\psi^{-1}_\ast
				X}{Y^\prime}).\end{equation}
			Since  $\nabla$ preserves $\mathcal{F}_i$ (see \ref{Bieq4}), from (\ref{Be3})
			we have \begin{equation*} \psi_\ast(\nabla_{\psi^{-1}_\ast
				X}{Y^\prime})\in \Gamma(\psi_\ast\mathcal{F}_i).\end{equation*}
			That is,
			\begin{equation*} (\nabla^\psi)_XY \in\Gamma(\psi_\ast\mathcal{F}_i).\end{equation*}
		\end{enumerate}
		We use the following observation to end the proof.
		\begin{remark}\label{Birem3}
			
			Observe that
			\begin{equation*}
			T_{\nabla ^{\psi}}(X,Y)=\psi_{\ast}(T_{\nabla}(\psi
			_{\ast}^{-1}X,\psi
			_{\ast}^{-1}Y)),\;X,Y\in\mathfrak{X} (N),\end{equation*}
			and
			\begin{equation*}
			R_{\nabla ^{\psi}}(X,Y)Z=\psi_{\ast}(R_{\nabla}(\psi
			_{\ast}^{-1}X,\psi_{\ast}^{-1}Y)\psi
			_{\ast}^{-1}Z),\; X,Y,Z\in\mathfrak{X} (N).\end{equation*}
		\end{remark}	
		Thus, if $R_{\nabla}= 0 $,  then  $R_{\nabla^{\psi}}= 0 $. As a consequence, 	$\triangleright$ preserves affine bi-Lagrangian structures.	
		
		This completes the proof of Lemma~\ref{Bilem1}.
	\end{proof}
	\begin{proof}[Proof of Theorem~\ref{Bitheo2}]
		By Lemma~\ref{Bilem1},
		$\triangleright$ is well defined, and by Remark~\ref{Birem3}, the inclusion  $\triangleright( Symp(M,\omega)\times\mathcal{B}_{lp}(M))\subset \mathcal{B}_{lp}(M)$ holds.    The action proprieties of $\triangleright$ come from those of the action of $Symp(M,\omega)$ on $\mathfrak{X}(M)$ (Remark~\ref{act}). Theorem~\ref{Bitheo2} is proved.
	\end{proof}
	
	\begin{proof}[Proof of Proposition~\ref{Biprop1}]
		By combining Theorem~\ref{Bitheo3} and Lemma~\ref{Bilem1}, Proposition~\ref{Biprop1} follows.
	\end{proof}

	By equality (\ref{Bieq2}) and Lemma~\ref{Bilem1}, we get the following result.
	\begin{proposition}\label{Biprop3}
		Let $(\omega, \mathcal{F}_1,\mathcal{F}_2)$ be a  bi-Lagrangian structure on a manifold $M$ with  $(G,F)$ as its associated para-K\"{a}hler structure, and let $\psi: M\longrightarrow N$ be a diffeomorphism.  Then the paracomplex structure $F^{\psi}$  associated  to $(\left(\psi^{-1}\right)^*\omega, \psi_\ast\mathcal{F}_1,\psi_\ast\mathcal{F}_2)$  is
		\begin{equation*}
		F^{\psi}(X)=\psi_\ast {F(\psi^{-1}_{*}X)},\; X\in\mathfrak{X} (N).
		\end{equation*}
	\end{proposition}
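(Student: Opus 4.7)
The plan is to exploit the intrinsic characterization of the paracomplex structure associated with a bi-Lagrangian structure: $F$ is the unique $(1,1)$-tensor field satisfying $F^{2}=\mathrm{Id}$ together with $F=+\mathrm{Id}$ on $T\mathcal{F}_{1}$ and $F=-\mathrm{Id}$ on $T\mathcal{F}_{2}$ (this characterization is visible pointwise through the adapted coordinates recalled in Remark~\ref{Birem1}, where $F$ takes the matrix form $\mathrm{diag}(I_{n},-I_{n})$). By Lemma~\ref{Bilem1}, the triple $((\psi^{-1})^{*}\omega,\psi_{\ast}\mathcal{F}_{1},\psi_{\ast}\mathcal{F}_{2})$ is a bi-Lagrangian structure on $N$, so an associated paracomplex structure $F^{\psi}$ exists and is characterized analogously by its $\pm 1$-eigenbundles $T(\psi_{\ast}\mathcal{F}_{1})$ and $T(\psi_{\ast}\mathcal{F}_{2})$. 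It therefore suffices to verify that the formula in the statement produces a tensor with these two properties.

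First I would check that the expression $X\mapsto \psi_{\ast}(F(\psi_{\ast}^{-1}X))$ defines a smooth $(1,1)$-tensor field on $N$: this is immediate since $\psi$ is a diffeomorphism and $F$ is smooth and pointwise $\mathbb{R}$-linear. A direct computation then yields
\begin{align*}
(F^{\psi})^{2}(X) &= \psi_{\ast}\!\left(F\bigl(\psi_{\ast}^{-1}\psi_{\ast}(F(\psi_{\ast}^{-1}X))\bigr)\right) \\
&= \psi_{\ast}\!\left(F^{2}(\psi_{\ast}^{-1}X)\right) = X,
\end{align*}
so $F^{\psi}$ is paracomplex. For the eigenvalue condition, pick $X\in\Gamma(\psi_{\ast}\mathcal{F}_{1})$; by equation (\ref{Bieq2}), $X=\psi_{\ast}X'$ for some $X'\in\Gamma(\mathcal{F}_{1})$, and then
$$
F^{\psi}(X) = \psi_{\ast}(F(X')) = \psi_{\ast}(X') = X,
$$
since $F$ restricts to the identity on $T\mathcal{F}_{1}$. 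The identical computation on $\Gamma(\psi_{\ast}\mathcal{F}_{2})$ produces $F^{\psi}(X)=-X$. The uniqueness recalled above then identifies $F^{\psi}$ with the paracomplex structure associated to $((\psi^{-1})^{*}\omega,\psi_{\ast}\mathcal{F}_{1},\psi_{\ast}\mathcal{F}_{2})$.

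There is no substantial obstacle: the proof is essentially a bookkeeping computation, rendered almost tautological by the functoriality of the pushforward and by equation (\ref{Bieq2}), which tells us exactly which vector fields on $N$ are sections of $\psi_{\ast}\mathcal{F}_{i}$. The only subtle point is the appeal to uniqueness of the associated paracomplex structure, which allows us to bypass any direct verification of the para-Kähler compatibility $(\psi^{-1})^{*}\omega(\,\cdot\,,\,\cdot\,)=G^{\psi}(F^{\psi}(\,\cdot\,),\,\cdot\,)$; if one wished to avoid invoking that uniqueness, the compatibility with the pushed-forward metric $G^{\psi}(X,Y):=G(\psi_{\ast}^{-1}X,\psi_{\ast}^{-1}Y)\circ\psi^{-1}$ could be checked by a parallel pushforward computation analogous to the one used for $\omega$ in Lemma~\ref{Bilem3}, but this verification is strictly superfluous.
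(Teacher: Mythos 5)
Your proof is correct and follows essentially the same route as the paper, which derives the proposition directly from equality (\ref{Bieq2}) and Lemma~\ref{Bilem1}; you simply make explicit the eigenbundle characterization of the paracomplex structure (visible in Remark~\ref{Birem1}) that the paper leaves implicit. The verification that $F^{\psi}$ squares to the identity and has $\pm1$-eigenbundles $T(\psi_{\ast}\mathcal{F}_{1})$ and $T(\psi_{\ast}\mathcal{F}_{2})$ is exactly the bookkeeping the paper's one-line argument relies on.
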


	\subsubsection{ Lifting of  $\triangleright$}
	
	\begin{proof}[Proof of Proposition~\ref{Biprop2}]

		We start by defining $\hat{\triangleright}$ a lift of $\triangleright$, and its action properties will come from those of $\triangleright$.
		\begin{proposition}\label{Biprop4} Let
			$(M,\omega)$ be a symplectic manifold endowed with a bi-Lagrangian structure. Then
			a lift $\hat{\triangleright}$ of
			$\triangleright$
			can be defined by
			\begin{equation*}
			\hat{\psi}\hat{\triangleright}(\mathcal{F}_1^{\pi},\mathcal{F}_2^{\pi})=
			(\psi\triangleright(\mathcal{F}_1,\mathcal{F}_2))^{\pi}=((\psi_{*}\mathcal{F}_1)^{\pi},(\psi_{*}\mathcal{F}_2)^{\pi})
			\end{equation*}
			for all
			$\psi\in Symp(M,\omega)$ and $(\omega,\mathcal{F}_1,\mathcal{F}_2)\in\mathcal{B}_{l}(M)$.
		\end{proposition}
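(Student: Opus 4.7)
My plan is to verify, in order, that the formula produces a well-defined lifted bi-Lagrangian structure on $(M\times\mathbb{R}^{2n},\tilde{\omega})$ and that $\hat{\triangleright}$ satisfies the group-action axioms, reducing both items to results already established in the paper. The well-definedness is essentially immediate: given $\psi\in Symp(M,\omega)$ and $(\mathcal{F}_1,\mathcal{F}_2)\in\mathcal{B}_l(M)$, Theorem~\ref{Bitheo2} gives $(\psi_\ast\mathcal{F}_1,\psi_\ast\mathcal{F}_2)\in\mathcal{B}_l(M)$ (using crucially that $\psi$ is a symplectomorphism, so $(\psi^{-1})^\ast\omega=\omega$), and Theorem~\ref{Bitheo3} applied to this new structure outputs a bi-Lagrangian structure $((\psi_\ast\mathcal{F}_1)^\pi,(\psi_\ast\mathcal{F}_2)^\pi)$ on $(M\times\mathbb{R}^{2n},\tilde\omega)$. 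Moreover $\hat\psi$ is uniquely determined by $\psi$ through Proposition~\ref{lifting of symplecto} and Remark~\ref{liouville, lifting of symplecto}, and $(\mathcal{F}_1^\pi,\mathcal{F}_2^\pi)$ uniquely determines $(\mathcal{F}_1,\mathcal{F}_2)$ since $\pi_\ast$ recovers the latter from the former; there is therefore no ambiguity in the notation on the left, and $\hat{\triangleright}$ is a well-defined map from $\hat{Symp}(M,\omega)$ acting on the subset of lifted bi-Lagrangian structures into itself.

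For the action axioms, I would first establish the functoriality of the cotangent lift $\psi\mapsto\hat\psi$, namely $\widehat{\mathrm{id}_M}=\mathrm{id}_{M\times\mathbb{R}^{2n}}$ and $\widehat{\psi\circ\varphi}=\hat\psi\circ\hat\varphi$ for all $\psi,\varphi\in Symp(M,\omega)$. Using the explicit formula $\hat{\varphi}(x,\alpha_x)=(\varphi(x),(\varphi^{-1\ast}\alpha)_{\varphi(x)})$ from Proposition~\ref{lifting of symplecto} and Remark~\ref{liouville, lifting of symplecto}, this reduces to the chain-rule identity $((\psi\circ\varphi)^{-1})^\ast=(\psi^{-1})^\ast\circ(\varphi^{-1})^\ast$ on covectors, which is routine. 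With this in hand, the identity axiom is immediate, and the composition axiom follows from the defining formula by
\begin{equation*}
\widehat{\psi\circ\varphi}\,\hat{\triangleright}(\mathcal{F}_1^\pi,\mathcal{F}_2^\pi)=\bigl((\psi\circ\varphi)\triangleright(\mathcal{F}_1,\mathcal{F}_2)\bigr)^\pi=\bigl(\psi\triangleright(\varphi\triangleright(\mathcal{F}_1,\mathcal{F}_2))\bigr)^\pi=\hat\psi\,\hat{\triangleright}\bigl(\hat\varphi\,\hat{\triangleright}(\mathcal{F}_1^\pi,\mathcal{F}_2^\pi)\bigr),
\end{equation*}
where the middle equality is exactly the assertion that $\triangleright$ itself is a group action (Theorem~\ref{Bitheo2}).

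I do not anticipate a serious obstacle, since every ingredient has been assembled earlier in the paper and the verification is formal. The most delicate point to emphasize is the closure property: $\hat{\triangleright}$ sends the subset $\{(\mathcal{F}_1^\pi,\mathcal{F}_2^\pi):(\mathcal{F}_1,\mathcal{F}_2)\in\mathcal{B}_l(M)\}$ of lifted structures inside $\mathcal{B}_l(M\times\mathbb{R}^{2n})$ back into itself, and this closure is precisely what is built into the right-hand side of the definition. Without it, the formula would still define a map, but it would not deserve the name \emph{lift} of $\triangleright$ in the sense of the commutative diagram displayed in the Remark preceding this proposition; with it, the diagram commutes tautologically, which is why the definition is the natural one.
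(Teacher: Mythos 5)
Your proposal is correct and follows essentially the same route as the paper, which simply defines $\hat{\triangleright}$ by the displayed formula and notes that its action properties are inherited from those of $\triangleright$ (Theorem~\ref{Bitheo2}), with well-definedness supplied by Theorem~\ref{Bitheo3} and the lift of symplectomorphisms from Proposition~\ref{lifting of symplecto} and Remark~\ref{liouville, lifting of symplecto}. Your write-up merely makes explicit the functoriality $\widehat{\psi\circ\varphi}=\hat\psi\circ\hat\varphi$ and the recoverability of $(\mathcal{F}_1,\mathcal{F}_2)$ from $(\mathcal{F}_1^{\pi},\mathcal{F}_2^{\pi})$, which the paper leaves implicit.
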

		
		\begin{proposition}\label{Biprop6}
			Let $\hat{\psi}\in\hat{Symp}(M,\omega)$ and
			$(\omega,\mathcal{F}_1,\mathcal{F}_2)\in\mathcal{B}_{l}(M)$ such that
			\begin{equation}\hat{\psi}_*(\mathcal{F}_1^{\pi})\subseteq
			(\psi_{*}\mathcal{F}_1)^{\pi}.\label{30c}\end{equation} Then
			\begin{equation*}\hat{\psi}\hat{\triangleright}(\mathcal{F}_1^{\pi},\mathcal{F}_2^{\pi})=\hat{\psi}\tilde{\triangleright}(\mathcal{F}_1^{\pi},\mathcal{F}_2^{\pi}).\label{19c}\end{equation*}
		\end{proposition}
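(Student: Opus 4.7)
Unpacking the conclusion via Proposition~\ref{Biprop4} (which defines $\hat\triangleright$) and Theorem~\ref{Bitheo2} (which defines $\tilde\triangleright$ as the pushforward action of $Symp(M\times\mathbb{R}^{2n},\tilde\omega)$), the claim $\hat\psi\hat\triangleright(\mathcal F_1^\pi,\mathcal F_2^\pi)=\hat\psi\tilde\triangleright(\mathcal F_1^\pi,\mathcal F_2^\pi)$ is equivalent to the pair of equalities of Lagrangian foliations on $(M\times\mathbb{R}^{2n},\tilde\omega)$:
\begin{equation*}
\hat\psi_*\mathcal F_1^\pi=(\psi_*\mathcal F_1)^\pi
\qquad\text{and}\qquad
\hat\psi_*\mathcal F_2^\pi=(\psi_*\mathcal F_2)^\pi.
\end{equation*}

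First I will dispatch the equality for $\mathcal F_1^\pi$. By Theorem~\ref{Bitheo3}, $\mathcal F_1^\pi$ is a Lagrangian foliation of $(M\times\mathbb{R}^{2n},\tilde\omega)$ of rank $2n$; applying Theorem~\ref{Bitheo2} (or Lemma~\ref{Bilem1}) to the symplectomorphism $\hat\psi$ shows that $\hat\psi_*\mathcal F_1^\pi$ is again Lagrangian of rank $2n$. The distribution $(\psi_*\mathcal F_1)^\pi$ has rank $2n$ by construction. The hypothesized inclusion combined with matching ranks forces equality. As a by-product, $(\psi_*\mathcal F_1)^\pi$ is a genuine Lagrangian foliation; and intersecting this equality with the vertical bundle $V=\ker\pi_*$, which $\hat\psi$ preserves because $\pi\circ\hat\psi=\psi\circ\pi$, yields $\hat\psi_*V_1=V_1$, where $V_1=\langle\partial/\partial\xi_{n+1},\dots,\partial/\partial\xi_{2n}\rangle$.

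The second equality is the main step. Pushing down by $\pi_*$, both $\hat\psi_*\mathcal F_2^\pi$ and $(\psi_*\mathcal F_2)^\pi$ project onto $\psi_*\mathcal F_2$, so the two candidate Lagrangians already agree as distributions on the base. To upgrade this to equality of subbundles of $T(M\times\mathbb{R}^{2n})$, I would work in a chart $(U,p^1,\dots,p^n,q^1,\dots,q^n)$ adapted to $(\mathcal F_1,\mathcal F_2)$ with associated bundle chart $(p^i,q^i,\xi_k)$, use the explicit local form of the cotangent-type lift $\hat\psi(p,q,\xi)=(\psi(p,q),(T_{(p,q)}\psi)^{-T}\xi)$ supplied by Remark~\ref{liouville, lifting of symplecto}, and compute $\hat\psi_*(\partial/\partial q^i)$ and $\hat\psi_*(\partial/\partial\xi_j)$ for $j\le n$, verifying that each belongs to $\Gamma((\psi_*\mathcal F_2)^\pi)$. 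The computation will exploit the symplecticity of $\psi$ (which constrains $T\psi$ to be symplectic) together with the block structure on $T\psi$ already forced by the first step through $\hat\psi_*V_1=V_1$. The rank matching then closes the argument. This second step is the main obstacle, since a Lagrangian complement of a given Lagrangian foliation is not uniquely determined by transversality and base projection alone; one cannot avoid a direct coordinate verification, tying together the vertical part of $\hat\psi_*\mathcal F_2^\pi$ with the specific vertical $V_2=\langle\partial/\partial\xi_1,\dots,\partial/\partial\xi_n\rangle$ appearing in $(\psi_*\mathcal F_2)^\pi$.
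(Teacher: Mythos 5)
Your reduction of the claim to the two equalities $\hat{\psi}_*\mathcal{F}_1^{\pi}=(\psi_*\mathcal{F}_1)^{\pi}$ and $\hat{\psi}_*\mathcal{F}_2^{\pi}=(\psi_*\mathcal{F}_2)^{\pi}$ is exactly the paper's, and your first half coincides in substance with the paper's argument: the inclusion (\ref{30c}) together with bijectivity of $\hat{\psi}_*$ and equality of ranks forces the first equality, and your by-products (preservation of the vertical bundle because $\pi\circ\hat{\psi}=\psi\circ\pi$, hence $\hat{\psi}_*V_1=V_1$; projection of both candidate foliations onto $\psi_*\mathcal{F}_2$) are correct and correspond to the commutative diagrams the paper draws. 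The genuine gap is the second equality, which you yourself single out as ``the main step'' but never establish: you only announce a coordinate computation (``I would work in a chart \dots verifying that each belongs'') without performing it, and a deferred verification is not a proof. Your own (correct) non-uniqueness remark shows this is where all the content sits: with $\tilde{\omega}=dq\wedge dp+d\xi_1\wedge dp+d\xi_2\wedge dq$ on $\mathbb{R}^4$, the distributions $\left<\frac{\partial}{\partial q}+f\frac{\partial}{\partial \xi_2},\,\frac{\partial}{\partial \xi_1}\right>$ are, for every function $f$, Lagrangian complements of $\mathcal{F}_1^{\pi}=\left<\frac{\partial}{\partial p},\frac{\partial}{\partial \xi_2}\right>$ which project onto $\left<\frac{\partial}{\partial q}\right>$ and meet the vertical in $\left<\frac{\partial}{\partial \xi_1}\right>$, yet they equal $\mathcal{F}_2^{\pi}$ only when $f=0$; so no amount of rank, transversality and projection bookkeeping can substitute for the missing check.

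Worse, the postponed verification would not go through as literally described. Take $\psi(p,q)=(p+g(q),q)$ on $(\mathbb{R}^2,dq\wedge dp)$: then $\hat{\psi}(p,q,\xi_1,\xi_2)=(p+g(q),q,\xi_1,\xi_2-\xi_1g'(q))$, the hypothesis (\ref{30c}) holds because $\hat{\psi}_*\frac{\partial}{\partial p}=\frac{\partial}{\partial p}$ and $\hat{\psi}_*\frac{\partial}{\partial \xi_2}=\frac{\partial}{\partial \xi_2}$ while $\psi_*\mathcal{F}_1=\mathcal{F}_1$, but $\hat{\psi}_*\frac{\partial}{\partial \xi_1}=\frac{\partial}{\partial \xi_1}-g'(q)\frac{\partial}{\partial \xi_2}$, which fails to lie in $\Gamma(\psi_*\mathcal{F}_2)+\left<\frac{\partial}{\partial \xi_1}\right>$ wherever $g'\neq 0$ if one reads $(\cdot)^{\pi}$ with the fixed splitting of $\xi_1,\dots,\xi_n$ against $\xi_{n+1},\dots,\xi_{2n}$ (indeed under that reading $\left<g'\frac{\partial}{\partial p}+\frac{\partial}{\partial q},\frac{\partial}{\partial \xi_1}\right>$ is not even $\tilde{\omega}$-Lagrangian, since $\tilde{\omega}(g'\frac{\partial}{\partial p}+\frac{\partial}{\partial q},\frac{\partial}{\partial \xi_1})=-g'$); the membership can only be rescued by re-adapting the chart, and with it the vertical splitting, to the pushed-forward pair --- a bookkeeping issue your sketch does not touch. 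The paper itself avoids coordinates entirely: it differentiates the square $\pi\circ\hat{\psi}=\psi\circ\pi$, invokes the decompositions $\Gamma(T(M\times\mathbb{R}^{2n}))=\Gamma(\mathcal{F}_1^{\pi})\oplus\Gamma(\mathcal{F}_2^{\pi})=\Gamma((\psi_*\mathcal{F}_1)^{\pi})\oplus\Gamma((\psi_*\mathcal{F}_2)^{\pi})$, and concludes both equalities at once from bijectivity of $\hat{\psi}_*$ and (\ref{30c}) --- a deduction whose handling of the second foliation is, precisely by your non-uniqueness observation, terser than the difficulty warrants, but it is the paper's proof. Bottom line: your first equality matches the paper; the decisive second one is absent from your proposal, and the plan you offer in its place is both unexecuted and more delicate than you allow.
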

		\begin{proof}
			Note that the diagram\\\smallskip
			\begin{center}
				\setlength{\unitlength}{1mm} \thicklines
				\begin{picture}(40,20)
				\put(3,0){$M$}
				\put(40,0){$M$}
				\put(-1.75,20){$M\times\mathbb{R}^{2n}$}
				\put(36,20){$M\times\mathbb{R}^{2n}$}
				\put(20,-2){$\psi$}
				\put(20,23){$\hat{\psi}$}
				\put(2.5,10){$\pi$}
				\put(43.5,10){$\pi$}
				\put(4,19){ \vector(0,-1){15}}
				\put(43,19){\vector(0,-1){15}}
				\put(13.5,21.65){	\vector(1,0){20}}
				\put(7.5,1.55){\vector(1,0){32}}
				\end{picture}
			\end{center}\smallskip
			is commutative. By lifting it on the tangent bundle, we get\\\smallskip

			\begin{center}
				\setlength{\unitlength}{1mm} \thicklines
				\begin{picture}(40,20)
				\put(3,0){$TM$}
				\put(40,0){$TM$}
				\put(-6,20){$T(M\times\mathbb{R}^{2n})$}
				\put(31,20){$T(M\times\mathbb{R}^{2n})$}
				\put(20,-2){$\psi_{*}$}
				\put(20,23){$\hat{\psi}_{*}$}
				\put(1.25,10){$\pi_{*}$}
				\put(43.5,10){$\pi_{*}$}
				\put(4,19){ \vector(0,-1){15}}
				\put(43,19){\vector(0,-1){15}}
				\put(15.5,21.65){	\vector(1,0){14}}
				\put(10.5,1.55){\vector(1,0){29}}
				\end{picture}
			\end{center}\smallskip	
			
			and  by the following decompositions
			\begin{equation*}
			\begin{cases}
			\Gamma(TM)=\Gamma\left(\mathcal{F}_1\right)\oplus\Gamma\left(\mathcal{F}_2\right)=\Gamma(\psi_{*}\mathcal{F}_1)\oplus\Gamma(\psi_{*}\mathcal{F}_2)\\
			\Gamma(T(M\times\mathbb{R}^{2n}))=\Gamma\left(\mathcal{F}_{1}^{\pi}\right)\oplus\Gamma\left(\mathcal{F}_{2}^{\pi}\right)=\Gamma((\psi_{*}\mathcal{F}_1)^{\pi})\oplus\Gamma((\psi_{*}\mathcal{F}_2)^{\pi})
			\end{cases}
			\end{equation*}
			we obtain\\\smallskip

			\begin{center}
				\setlength{\unitlength}{1mm} \thicklines
				\begin{picture}(40,20)
				\put(-20,0){$\Gamma\left(\mathcal{F}_1\right)\oplus\Gamma\left(\mathcal{F}_2\right)$}
				\put(30,0){$\Gamma(\psi_{*}\mathcal{F}_1)\oplus\Gamma(\psi_{*}\mathcal{F}_2)$}
				\put(-20,20){$\Gamma\left(\mathcal{F}_1^{\pi}\right)\oplus\Gamma\left(\mathcal{F}_2^{\pi}\right)$}
				\put(25,20){$\Gamma((\psi_{*}\mathcal{F}_1)^{\pi})\oplus\Gamma((\psi_{*}\mathcal{F}_2)^{\pi})$}
				\put(15,-2){$\psi_{*}$}
				\put(13,23){$\hat{\psi}_{*}$}
				\put(-10,10){$\pi_{*}$}
				\put(49,10){$\pi_{*}$}
				\put(-7,19){ \vector(0,-1){15.5}}
				\put(48,19){\vector(0,-1){15.5}}
				\put(8,21.65){	\vector(1,0){14}}
				\put(8.5,1.55){\vector(1,0){21}}
				\end{picture}
			\end{center}\smallskip

			Thus, since $\hat{\psi}_*$ is bijective, then  by (\ref{30c}) we obtain
			
			\begin{equation*}\hat{\psi}_*(\mathcal{F}_1^{\pi})= (\psi_{*}\mathcal{F}_1)^{\pi}\;\mbox{ and }\;\hat{\psi}_*(\mathcal{F}_2^{\pi})=
			(\psi_{*}\mathcal{F}_2)^{\pi}.\end{equation*}
		\end{proof}
		
		In the next result, we give a condition to obtain    \textbf(\ref{30c}). We use the previous notations.

		\begin{proposition}\label{prop1}
			Let $\psi$ be a symplectomorphism on $(M,\omega).$ If each point of $M$ has a coordinate chart
			$(U, p^1,\dots,p^{n},q^1,\dots,q^{n})$ adapted to $(\mathcal{F}_1, \mathcal{F}_2)$ such that
			\begin{equation}\hat{\psi}_*\frac{\partial}{\partial p^i}\in\Gamma((\psi_{*}\mathcal{F}_1)^{\pi}),\; i\in[n],\label{eq1}\end{equation}
			then\begin{equation*}\hat{\psi}_*(\mathcal{F}_1^{\pi})\subseteq
			(\psi_{*}\mathcal{F}_1)^{\pi}.\label{25c}\end{equation*}
		\end{proposition}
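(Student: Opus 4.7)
The plan is to argue locally and show that $\hat\psi_*$ maps each generator of $\Gamma(\mathcal{F}_1^\pi)$ into $\Gamma((\psi_*\mathcal{F}_1)^\pi)$. Over a chart $(U,p^i,q^i)$ adapted to $(\mathcal{F}_1,\mathcal{F}_2)$ as in the hypothesis, with associated bundle coordinates $(\xi_j)_{j=1}^{2n}$, the module $\Gamma(\mathcal{F}_1^\pi)|_{U\times\mathbb{R}^{2n}}$ is generated by the horizontal fields $\partial/\partial p^i$ and the vertical fields $\partial/\partial\xi_{n+j}$ for $i,j\in[n]$. For the horizontal generators, the conclusion is precisely the hypothesis (\ref{eq1}), so the entire content of the proof is to handle the vertical ones.

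The first step is to transport the chart to the target. Set $(\tilde p^i,\tilde q^i):=(p^i\circ\psi^{-1},q^i\circ\psi^{-1})$ on $\psi(U)$; this chart is adapted to $(\psi_*\mathcal{F}_1,\psi_*\mathcal{F}_2)$, with associated bundle coordinates $(\tilde\xi_k)_{k=1}^{2n}$. Write $\psi(p,q)=(P(p,q),Q(p,q))$ and expand $\psi_*(\partial/\partial p^i)$ in the $(\tilde p,\tilde q)$-basis: the automatic inclusion $\psi_*(\partial/\partial p^i)\in\Gamma(\psi_*\mathcal{F}_1)=\langle\partial/\partial\tilde p^k\rangle$ forces $\partial Q^k/\partial p^i=0$. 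Consequently $Q$ depends only on $q$, and inverting yields the block form $\psi^{-1}(\tilde p,\tilde q)=(\tilde P(\tilde p,\tilde q),\tilde Q(\tilde q))$, i.e.\ the $q$-component of $\psi^{-1}$ is independent of $\tilde p$.

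The second step uses the explicit local form of the cotangent-type lift from Proposition~\ref{lifting of symplecto}: $\hat\psi(p,q,\xi)=(\psi(p,q),\tilde\xi)$ with
\[
\tilde\xi_k \;=\; \sum_{j=1}^{2n}\xi_j\,\frac{\partial\psi^{-1,j}}{\partial\tilde y^k}\Big|_{\tilde y=\psi(p,q)}, \qquad \tilde y=(\tilde p,\tilde q).
\]
Since the base coordinates of the image are independent of the fiber variables, differentiation in $\xi_{n+j}$ produces only vertical terms:
\[
\hat\psi_*\!\left(\frac{\partial}{\partial\xi_{n+j}}\right) \;=\; \sum_{k=1}^{2n}\frac{\partial\psi^{-1,n+j}}{\partial\tilde y^k}\,\frac{\partial}{\partial\tilde\xi_k}.
\]
By the block structure obtained in the first step, $\psi^{-1,n+j}=\tilde Q^j(\tilde q)$ is independent of $\tilde p$, so the summands with $k\in[n]$ (where $\tilde y^k=\tilde p^k$) vanish, and only the summands with $k\in\{n+1,\dots,2n\}$ survive. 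The latter are multiples of $\partial/\partial\tilde\xi_{n+l}$ and hence lie in $\Gamma((\psi_*\mathcal{F}_1)^\pi)$. Combining this with the hypothesis (\ref{eq1}) on the horizontal generators yields $\hat\psi_*(\Gamma(\mathcal{F}_1^\pi))\subseteq\Gamma((\psi_*\mathcal{F}_1)^\pi)$ on $U\times\mathbb{R}^{2n}$; since the chart was arbitrary, the inclusion is global.

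The main obstacle is really the bookkeeping: one has to juggle two adapted chart systems, the Jacobian of $\psi$, and the fiber-transformation rule for the cotangent-type lift $\hat\psi$. The structural content is modest: the automatic relation $Q=Q(q)$, extracted from the fact that $\psi$ sends $\mathcal{F}_1$ to $\psi_*\mathcal{F}_1$, is what makes the vertical generators $\partial/\partial\xi_{n+j}$ behave correctly, while the hypothesis (\ref{eq1}) is imposed precisely to control the horizontal generators $\partial/\partial p^i$.
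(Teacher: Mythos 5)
Your proof is correct, but for the only nontrivial step --- the vertical generators $\partial/\partial\xi_{n+j}$ --- it takes a genuinely different route from the paper's. Both arguments start from the local generating set $\left<\partial/\partial p^i,\partial/\partial\xi_{n+j}\right>$ of $\Gamma(\mathcal{F}_1^{\pi})$ and both dispose of the horizontal generators by quoting hypothesis (\ref{eq1}); the difference is how the vertical ones are handled. The paper argues symplectically: since $\hat{\psi}$ preserves $\tilde{\omega}$, one has $\tilde\omega\left(\hat{\psi}_*\frac{\partial}{\partial p^i},\hat{\psi}_*\frac{\partial}{\partial\xi_j}\right)=\tilde\omega\left(\frac{\partial}{\partial p^i},\frac{\partial}{\partial\xi_j}\right)\circ\hat{\psi}^{-1}=0$, so $\hat{\psi}_*\partial/\partial\xi_j$ is $\tilde\omega$-orthogonal to vectors which (\ref{eq1}) places inside the Lagrangian distribution $(\psi_{*}\mathcal{F}_1)^{\pi}$, and the identity $\Gamma(((\psi_{*}\mathcal{F}_1)^{\pi})^{\bot})=\Gamma((\psi_{*}\mathcal{F}_1)^{\pi})$ finishes. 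You instead compute the fiber transformation of the cotangent-type lift in the transported adapted chart $(\tilde p,\tilde q)=(p\circ\psi^{-1},q\circ\psi^{-1})$ and exhibit $\hat{\psi}_*\partial/\partial\xi_{n+j}$ as a combination of the adapted vertical fields $\partial/\partial\tilde\xi_{n+l}$; in fact, since $q^j\circ\psi^{-1}=\tilde q^j$ by construction, your formula collapses to $\hat{\psi}_*\partial/\partial\xi_{n+j}=\partial/\partial\tilde\xi_{n+j}$, and your ``first step'' ($Q=Q(q)$) is automatic in that chart rather than something to be extracted. What your computation buys: it shows the vertical half of the inclusion holds for \emph{any} lifted diffeomorphism, with (\ref{eq1}) needed only for the horizontal half, and it sidesteps a weak point of the symplectic version --- orthogonality to the $n$ vectors $\hat{\psi}_*\partial/\partial p^i$ alone does not place a vector in the orthogonal complement of the rank-$2n$ distribution $(\psi_{*}\mathcal{F}_1)^{\pi}$, so the appeal to $(\cdot)^{\bot}=(\cdot)$ really requires pairing against a full set of generators. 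What the paper's version buys is brevity and independence from the explicit local form of $\hat{\psi}$. Note that both arguments (yours explicitly, the paper's implicitly) read the fiber directions in $(\psi_{*}\mathcal{F}_1)^{\pi}$ in bundle coordinates adapted to $\psi_{*}\mathcal{F}_1$ rather than in fixed coordinates on the $\mathbb{R}^{2n}$ factor; that is the convention used throughout the paper, so your proof is consistent with it.
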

		\begin{proof}
			Let
			$(U\times\mathbb{R}^{2n}p^1,\dots,p^{n},q^1,\dots,q^{n},\xi_1\dots\xi_{2n})$ be
			a bundle coordinate chart associated to  $(U, p^1,\dots,p^{n},q^1,\dots,q^{n})$. We have
			\begin{equation*}
			\Gamma(\hat{\psi}_*(\mathcal{F}_1^{\pi}))=\left<\hat{\psi}_*\frac{\partial}{\partial
				p^1},\dots\hat{\psi}_*\frac{\partial}{\partial
				p^n},\hat{\psi}_*\frac{\partial}{\partial
				\xi_{n+1}},\dots,\hat{\psi}_*\frac{\partial}{\partial
				\xi_{2n}}\right>.\end{equation*} Thus, by (\ref{eq1}) it remains to  show that
			\begin{equation*}\hat{\psi}_*\frac{\partial}{\partial \xi_i}\in
			(\psi_{*}\mathcal{F}_1)^{\pi},\; i=n+1,\dots,2n.\end{equation*}
			Let $i=1,\dots,n,\;j=n+1,\dots,2n$. We have
			\begin{equation*}
			\tilde\omega\left(\hat{\psi}_*\frac{\partial}{\partial
				p^i},\hat{\psi}_*\frac{\partial}{\partial
				\xi_j}\right)=\tilde\omega\left(\frac{\partial}{\partial
				p^i},\frac{\partial}{\partial \xi_j}\right)\circ\hat{\psi}^{-1}=0.\end{equation*}
			Then   $
			\hat{\psi}_*\frac{\partial}{\partial\xi_i} $ belongs to
			$\Gamma(((\psi_{*}\mathcal{F}_1)^{\pi})^{\bot})$ which is equal to
			$\Gamma((\psi_{*}\mathcal{F}_1)^{\pi})$.
			
			This completes the proof of Proposition~\ref{prop1}.
		\end{proof}
		By combining Proposition~\ref{prop1} and Proposition~\ref{Biprop6}, Proposition~\ref{Biprop2} follows.
	\end{proof}
	



	
	\section{ Examples on  $(\mathbb{R} ^2, \omega)$}
	We start this part by introducing Christoffel symbols. Let $G$ be a pseudo-Riemannian metric in $\mathbb{R}^2$ defined as follows:
	$G(\partial_i,\partial_j)=G_{ij}$ where $\partial_1=\dfrac{\partial}{\partial x}$ and
	$\partial_2=\dfrac{\partial}{\partial y}$. Let $\nabla$ be the Levi-Civita connection of $G$. The Christoffel symbols $\Gamma_{ij}^k$; $i,j,k=1,2$ of $\nabla$ are defined as follows:
	$\nabla_{\partial_i}{\partial_j}=\Gamma_{ij}^k\partial_k.$
	More precisely,
	$$\Gamma_{ij}^k=\dfrac{1}{2}G^{kl}\left(\partial_jG_{il}+\partial_iG_{lj}-\partial_lG_{ij}\right).$$
	
	Our first examples are described on  affine bi-Lagrangian structures. Suppose that $( \omega,\mathcal{F}_1,\mathcal{F}_2)$ is an affine bi-Lagrangian structure on $\mathbb{R}^2$. By Remark~\ref{Birem1} there exists a  system coordinate  $(x,y)$ such that
	$$ \omega=dy\wedge dx,\;  F= \dfrac{\partial}{\partial x}dx -\dfrac{\partial}{\partial y} dy\mbox{ and } G= dx\otimes dy   $$
	where $(G,F)$ is the associated para-Kh\"{a}ler structure of $( \omega,\mathcal{F}_1,\mathcal{F}_2)$.
	As a consequence, the Hess connection  associated to $( \omega,\mathcal{F}_1,\mathcal{F}_2)$ (which is the Levi-Civita connection of $G$, see \cite{FR1, FR2, 1, FE}) is  trivial; that is,  its Christoffel symbols vanish. That is  why  we will present a second example with a non-trivial Hess connection.
	\begin{remark}\label{Birem2}
		Let $M$ be a manifold. Every $X\in\mathfrak{X}(M)$ without singularity (this means, $X_z\neq 0$ for every $z\in M$) generates (induces) a foliation $\mathcal{F}^X$ on $M$. In particular, if $M$ is a 2-manifold endowed with a symplectic form $\omega$, then $\mathcal{F}^X$ is Lagrangian independently of the symplectic form $\omega$. As a consequence, any $X, Y\in\mathfrak{X}(M)$ such that $Dim\,\left<X_z, Y_z\right>=2$ for every point $z\in M$ generates a bi-Lagrangian structure on $(M, \omega)$, independently of the  symplectic form $\omega$.
	\end{remark}
	\subsection{Case of $(\mathbb{R} ^2, \omega=dy\wedge dx)$}
	\subsubsection{Action of  $Symp(\mathbb{R} ^2, \omega)$ on $\mathcal B_l(\mathbb{R} ^2)$}
	\paragraph{Symplectomorphism group on $(\mathbb{R} ^2, \omega)$}
	
	$$Symp(\mathbb{R} ^2, \omega):=\{ \psi\in Diff(\mathbb{R}^2):\,\det T_x\psi=1\} $$
	where
	$$ \det T_x\psi:=\frac{\partial \psi _1}{\partial
		x^1}\frac{\partial \psi _2}{\partial x^2}-\frac{\partial \psi
		_2}{\partial x^1}\frac{\partial \psi _1}{\partial x^2}.
	$$
	
	For technical reasons, we describe our example on the subgroup
	$Symp_a(\mathbb{R} ^2, \omega)$ of $Symp(\mathbb{R} ^2, \omega)$ defined by:
	$$Symp_a(\mathbb{R} ^2, \omega)=\left\{\psi_{AB}:(x,y)\mapsto A\left(\begin{array}{c}
	x\\y\end{array}\right)+B,\; A\in SL_2(\mathbb{R}), B\in \mathbb{R}^2\right\}$$
	where
	$SL_2(\mathbb{R})=\{A\in M_2(\mathbb{R}):\; \det A=1\} $.

	The action of  $ Symp_a(\mathbb{R} ^2, \omega)$ on $\mathfrak{X}(\mathbb{R}
	^2)$ is:
	$Symp_a(\mathbb{R} ^2, \omega)\times \mathfrak{X}(\mathbb{R} ^2)  \longrightarrow
	\mathfrak{X}(\mathbb{R} ^2)$, $(\psi,X) \longmapsto  \psi _{\ast}X.$	
	More precisely,  for any
	\begin{equation*} (x,y)\in \mathbb{R} ^2,\; \psi_{\ast(x,y)}=\left( \begin{array}{lll}\smallskip \alpha & \beta\\
	\gamma & \delta \end{array}\right)\; \mbox{ and }\; X=\left(\begin{array}{c}
	X^1\\X^2\end{array}\right),
	\end{equation*}
	
	\begin{equation*} \psi_{\ast (x,y)}X_{(x,y)}=\left( \begin{array}{lll}\smallskip \alpha & \beta\\
	\gamma & \delta \end{array}\right) \left(\begin{array}{lll}
	X^{1}(x,y) \\ X^{2}(x,y)\end{array} \right)=\left(\begin{array}{lll}
	\alpha X^{1}(x,y)+\beta X^{2}(x,y) \\ 	\gamma X^{1}(x,y)+\delta X^{2}(x,y)\end{array} \right).\end{equation*}
	Let $(\mathcal{F}^x,\mathcal{F}^y)$	be the pair of two decompositions of $\mathbb{R} ^2$ constituted of all horizontal and vertical lines respectively.  That is, $$\mathcal{F}^x=\left\{\mathcal{F}^x_b=\mathbb{R}\times\{b\}\right\}_{b\in\mathbb{R}} \mbox{ and } \mathcal{F}^y=\left\{\mathcal{F}^y_a=\{a\}\times\mathbb{R}\right\}_{a\in\mathbb{R}}.$$  As a consequence,
	$$\Gamma\left(\mathcal{F}^x\right)= \mathbb{R}\times\{0\}=\left<\dfrac{\partial}{\partial x} \right> \mbox{ and } \Gamma\left(\mathcal{F}^y\right)= \{0\}\times\mathbb{R}=\left<\dfrac{\partial}{\partial y} \right>.$$
	By Remark~\ref{Birem2},	$(\mathcal{F}^x,\mathcal{F}^y)$ is a bi-Lagrangian structure on $(\mathbb{R} ^2, \omega)$. Before graphing it, we describe the action (in the sense of Theorem~\ref{Bitheo2}) of $ Symp_a(\mathbb{R} ^2, \omega)$ on $(\mathcal{F}^x,\mathcal{F}^y)$.
	
	Let $\psi\in Symp_a(\mathbb{R} ^2, \omega)$.
	Observe that 	
	\begin{equation*}
	\begin{cases}
	\psi_{\ast}\mathcal{F}^x_b:y=\frac{\gamma}{\alpha}x+\frac{b}{\alpha},\; b\in \mathbb{R},\vspace{0.25cm} \\
	\Gamma\left(\psi
	_{\ast}\mathcal{F}^x\right)= \left<\alpha\dfrac{\partial}{\partial x}+\gamma\dfrac{\partial}{\partial y} \right>,
	\end{cases}
	\mbox{ and }
	\begin{cases}
	\psi_{\ast}\mathcal{F}^y_a: y=\frac{\delta}{\beta}x-\frac{a}{\beta},\; a\in \mathbb{R},\vspace{0.25cm}\\\Gamma\left(\psi
	_{\ast}\mathcal{F}^y\right)= \left<\beta\dfrac{\partial}{\partial x}+\delta\dfrac{\partial}{\partial y} \right>.
	\end{cases} 			
	\end{equation*}	
	The  para-complex structure  $F^\psi$  associated to
	$(\psi_{\ast}\mathcal{F}^x, \psi_{\ast}\mathcal{F}^y)$ is	
	\begin{equation*}
	F^{\psi}(\psi _{\ast}\frac{\partial}{\partial
		x})=\alpha \dfrac{\partial}{\partial x}+\gamma\dfrac{\partial}{\partial y}	\mbox{ and }
	F^{\psi}(\psi
	_{\ast}\frac{\partial}{\partial y})=-\beta\dfrac{\partial}{\partial x}-\delta\dfrac{\partial}{\partial y}.
	\end{equation*}
	
	Similar results are obtained for another  bi-Lagrangian structure belonging in  the orbit  $$\mathcal{B}_{0}=\left\{ (\psi_{\ast}\mathcal{F}^x, \psi_{\ast}\mathcal{F}^y),\; \psi\in Symp_a(\mathbb{R} ^2, \omega)\right\}$$ of  $(\mathcal{F}^x,\mathcal{F}^y)$  with respect to
	$\triangleright| Symp_a(\mathbb{R} ^2, \omega)\times\mathcal B_l(\mathbb{R}
	^2).$
	
	The bi-Lagrangian structure $(\mathcal{F}^x,\mathcal{F}^y)$ can be represented as follows.
	
	\begin{figure}[htp]	
		\centering
		\setlength{\unitlength}{9cm}
		\begin{picture}(1, 1)
		\multiput(0.04,0)(0.04,0){24}
		{\line(0,1){1}}

		\multiput(0,0.04)(0,0.04){24}
		{\line(1,0){1}}
		
		\end{picture}
		\caption{The bi-Lagrangian structure $(\mathcal{F}^x,\mathcal{F}^y)$} 
		\label{a map 1}    
	\end{figure}

	Now, we are going to apply Proposition~\ref{Biprop6} to   $\triangleright| Symp_a(\mathbb{R} ^2, \omega)\times\mathcal{B}_{0}$
	
	Note that for any \begin{equation*}
	A=\left( \begin{array}{lll}\smallskip \alpha & \beta\\
	\gamma & \delta \end{array}\right)\in SL_2(\mathbb{R})\;\mbox{ and }\;B=\left(\begin{array}{lll} a\\
 	b\end{array}\right)\in\mathbb{R}^2,\end{equation*}
	the map
	\begin{equation*}\psi_{AB}: \,\left(\begin{array}{lll} x \\
	y\end{array}\right)\in \mathbb{R}^2 \longmapsto A\left(\begin{array}{lll} x \\
	y\end{array}\right)+B \end{equation*}
	is invertible with the explicit inverse   \begin{equation}\label{21c}
	\psi_{AB}^{-1}: \,\left(\begin{array}{lll} x \\
	y\end{array}\right)\in \mathbb{R}^2 \longmapsto A^{-1}\left(\begin{array}{lll} x \\
	y\end{array}\right)-A^{-1}B  \end{equation}
	
	\paragraph{Lifting of affine symplectomorphism}
	\begin{proposition}\label{affi sympl}
		An affine  symplectomorphism on $(\mathbb{R}^2,\omega)$ lifts  as an affine symplectomorphism on $(\mathbb{R}^4, \tilde{\omega})$. That is,  $ \hat{Symp_a}(\mathbb{R} ^2)\subset Symp_a(\mathbb{R} ^4, \tilde{\omega})$.
	\end{proposition}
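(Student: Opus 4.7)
The plan is to take an arbitrary affine symplectomorphism $\psi_{AB}(z)=Az+B$ on $(\mathbb{R}^2,\omega)$ (with $A\in SL_2(\mathbb{R})$ and $B\in\mathbb{R}^2$), compute its lift $\hat\psi_{AB}$ explicitly from the formula in Proposition~\ref{lifting of symplecto} (transported to $\mathbb{R}^2\times\mathbb{R}^2$ via Remark~\ref{liouville, lifting of symplecto}), and then read off that this lift is again affine with linear part preserving $\tilde\omega$ and of determinant one.

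First I would use equality (\ref{21c}) to write $\psi_{AB}^{-1}(z)=A^{-1}z-A^{-1}B$, whose differential is constantly equal to $A^{-1}$. For any covector $\xi$ and any tangent vector $v$ one then has $((\psi_{AB}^{-1})^*\xi)(v)=\xi(A^{-1}v)$, which in matrix form, after identifying $\mathbb{R}^2$ with $(\mathbb{R}^2)^*$, reads $(\psi_{AB}^{-1})^*\xi=(A^{-1})^T\xi$. Substituting into the lifting formula of Proposition~\ref{lifting of symplecto} produces the explicit expression
$$\hat\psi_{AB}(z,\xi)=\left(Az+B,\,(A^{-1})^T\xi\right),$$
which is an affine map on $\mathbb{R}^4$ with linear part $\tilde A=\begin{pmatrix} A & 0\\ 0 & (A^{-1})^T\end{pmatrix}$ and translation $(B,0)^T$.

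It then remains to check that $\hat\psi_{AB}\in Symp_a(\mathbb{R}^4,\tilde\omega)$. Proposition~\ref{lifting of symplecto} already guarantees that $\hat\psi_{AB}$ is a symplectomorphism of $(\mathbb{R}^4,\tilde\omega)$, so the linear part $\tilde A$ automatically preserves $\tilde\omega$; the determinant condition $\det\tilde A=\det A\cdot\det (A^{-1})^T=1$ comes for free from $A\in SL_2(\mathbb{R})$. Combined with the affine shape of $\hat\psi_{AB}$, this is exactly the defining condition for membership in the affine symplectomorphism subgroup $Symp_a(\mathbb{R}^4,\tilde\omega)$, defined on $\mathbb{R}^4$ by direct analogy with $Symp_a(\mathbb{R}^2,\omega)$. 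The only step requiring mild care is the correct matrix form of the covector pullback; beyond that bookkeeping there is no real obstacle.
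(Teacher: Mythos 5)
Your proposal is correct and follows essentially the same route as the paper: invert $\psi_{AB}$ via (\ref{21c}), pull back the covector, and read off that the lift is affine with block-diagonal linear part. The only cosmetic difference is that you write the lower block as $(A^{-1})^{T}$ while the paper labels it $A^{-1}$ (its displayed entries actually agree with your transpose); since $\det A=1$ either way, the conclusion is unaffected.
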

	
	\begin{proof}
		Let  $\psi\in Symp_a(\mathbb{R} ^2, \omega)$. We have
		\begin{equation*}\hat{\psi}:z=(p,\xi_p)\longmapsto(\psi(p),(\psi^{-1*}\xi)_{\psi(p)}.
		\end{equation*}
		Let $(x,y,s,t)$ be a coordinate system on $\mathbb{R}^4$. Then  $z=(x,y,s,t)$, $\xi=sdx+tdy$ and $\tilde{\omega}=dy\wedge dx +ds\wedge dx+ dt\wedge dy .$ Moreover, since \begin{equation*}\psi(x,y)=(\alpha x+\beta y+a,\gamma x+\delta y+b)\end{equation*}
		for some $\alpha,\beta,\gamma,\delta, a,b\in\mathbb{R}$
		verifying  $\alpha\delta-\beta\gamma=1,$  then by (\ref{21c}), we get
		\begin{equation*}\psi^{-1}(x,y)=(\delta x-\beta y+\delta a-\beta b,-\gamma x+\alpha
		y-\delta a+\alpha b).\end{equation*}
		As a consequence,
		\begin{equation*}
		(\psi^{-1*}\xi)_{\psi(p)}=(s(p)\delta-t(p)\gamma)dx+(\alpha t(p)-\beta s(p))dy.
		\end{equation*}
		Then
		\begin{equation*}
		\hat{\psi}(z)=(\alpha x+\beta y+a,\gamma x+\delta
		y+b,s\delta-t\gamma,-\beta s+\alpha t).
		\end{equation*}
		Therefore
		\begin{equation}
		T_z\hat{\psi}=\hat{\psi}_*z=\left( \begin{array}{llrr}\smallskip \alpha & \beta & 0 & 0\\
		\gamma & \delta & 0 & 0\\
		0 &  0 & \delta & -\beta\\
		0 &  0 &-\gamma &\alpha\end{array}\right)=\left( \begin{array}{ll}\smallskip A & 0\\
		0 & A^{-1}\end{array}\right)\label{c31}\end{equation}
		where \begin{equation*}
		A=\left( \begin{array}{lll}\smallskip \alpha & \beta\\
		\gamma & \delta \end{array}\right).\end{equation*}
		This ends the proof of Proposition~\ref{affi sympl}. \end{proof}

	{Lifting of  $(\mathcal{F}^x,\mathcal{F}^y)$.}
	
	Note that
	\begin{equation*}\Gamma(\mathcal{F}^x)=\left<\frac{\partial}{\partial
		x}\right>\mbox{ and }\Gamma(\mathcal{F}^y)=\left<\frac{\partial}{\partial
		y}\right>.\label{23c}\end{equation*} Thus,
	\begin{equation*}\Gamma((\mathcal{F}^x)^{\pi})=\left<\frac{\partial}{\partial
		x},\frac{\partial}{\partial
		t}\right>\;\mbox{ and }\;\Gamma((\mathcal{F}^y)^{\pi})=\left<\frac{\partial}{\partial
		y},\frac{\partial}{\partial s}\right>. \label{22c}\end{equation*}
	\begin{proposition}\label{27c}
		Let
		$\psi\in Symp_a(\mathbb{R} ^2, \omega)$. Then $\hat{\psi}_*((\mathcal{F}^y)^{\pi})\subseteq
		(\psi_*\mathcal{F}^y)^{\pi}.$
	\end{proposition}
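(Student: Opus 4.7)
The plan is to transpose the argument of Proposition~\ref{prop1} to the setting where $\mathcal{F}^y$ plays the role of the second Lagrangian $\mathcal{F}_{2}$, using $(x,y)$ as the adapted chart and $(x,y,s,t)$ as the associated bundle chart, so that $\Gamma((\mathcal{F}^y)^{\pi}) = \left\langle \partial/\partial y,\ \partial/\partial s \right\rangle$. Since $\hat{\psi}_{*}$ is linear and $(\mathcal{F}^y)^{\pi}$ is generated (as a distribution) by these two fields, it suffices to show separately that $\hat{\psi}_{*}(\partial/\partial y)$ and $\hat{\psi}_{*}(\partial/\partial s)$ both belong to $\Gamma((\psi_{*}\mathcal{F}^y)^{\pi})$.

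For the base generator, formula (\ref{c31}) yields $\hat{\psi}_{*}(\partial/\partial y) = \beta\, \partial/\partial x + \delta\, \partial/\partial y = \psi_{*}(\partial/\partial y)$, which is exactly a local generator of $\Gamma(\psi_{*}\mathcal{F}^y)$ and therefore lies in $\Gamma((\psi_{*}\mathcal{F}^y)^{\pi})$. This is the analog for $\mathcal{F}^y$ of the hypothesis appearing in Proposition~\ref{Biprop2}.

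For the fiber generator, I would exploit Lagrangianity. By Theorem~\ref{Bitheo2} applied to $\psi \in \mathit{Symp}_a(\mathbb{R}^2,\omega)$, the pair $(\psi_{*}\mathcal{F}^x, \psi_{*}\mathcal{F}^y)$ is again a bi-Lagrangian structure on $(\mathbb{R}^2,\omega)$, so by Theorem~\ref{Bitheo3} the lift $(\psi_{*}\mathcal{F}^y)^{\pi}$ is Lagrangian in $(\mathbb{R}^4,\tilde{\omega})$, and thus coincides with its own $\tilde{\omega}$-orthogonal. It is then enough to verify two orthogonality relations. The first,
\begin{equation*}
\tilde{\omega}\bigl(\hat{\psi}_{*}(\partial/\partial s),\ \hat{\psi}_{*}(\partial/\partial y)\bigr) = \tilde{\omega}(\partial/\partial s,\ \partial/\partial y)\circ \hat{\psi}^{-1} = 0,
\end{equation*}
holds because $\hat{\psi}$ is a symplectomorphism and $\partial/\partial s,\, \partial/\partial y$ both lie in the Lagrangian $(\mathcal{F}^y)^{\pi}$. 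The second,
\begin{equation*}
\tilde{\omega}\bigl(\hat{\psi}_{*}(\partial/\partial s),\ \partial/\partial s\bigr) = 0,
\end{equation*}
follows once one reads off from (\ref{c31}) that $\hat{\psi}_{*}(\partial/\partial s) = \delta\, \partial/\partial s - \gamma\, \partial/\partial t$ is vertical (no $\partial/\partial x,\, \partial/\partial y$ components), combined with the direct computation $\tilde{\omega}(\partial/\partial \xi_i, \partial/\partial \xi_j) = 0$ showing that the vertical subspace is $\tilde{\omega}$-isotropic.

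The only delicate point is the verticality of $\hat{\psi}_{*}(\partial/\partial s)$, which is precisely what the upper-right zero block of the matrix (\ref{c31}) encodes; it is a feature of the affine case and is what allows the orthogonality argument of Proposition~\ref{prop1} to go through verbatim in the $\mathcal{F}_{2}$-slot. Once this verticality is in hand, the two orthogonalities together place $\hat{\psi}_{*}(\partial/\partial s)$ in $\Gamma((\psi_{*}\mathcal{F}^y)^{\pi})^{\perp} = \Gamma((\psi_{*}\mathcal{F}^y)^{\pi})$, and the inclusion $\hat{\psi}_{*}((\mathcal{F}^y)^{\pi}) \subseteq (\psi_{*}\mathcal{F}^y)^{\pi}$ follows.
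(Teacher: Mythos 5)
Your proof is correct and takes essentially the same route as the paper: the paper's two-line proof just verifies from (\ref{c31}) that $\hat{\psi}_*\frac{\partial}{\partial x}=\psi_*\frac{\partial}{\partial x}$ lies in the pushed-forward lifted foliation and then cites Proposition~\ref{Biprop2} (i.e.\ Proposition~\ref{prop1}), whereas you inline that proposition's orthogonality-plus-Lagrangianity argument for the fiber generator. If anything, you are slightly more careful than the paper, which checks the hypothesis for $\mathcal{F}^x$ while stating the conclusion for $\mathcal{F}^y$ (a notational slip the symmetric roles of the two foliations make harmless).
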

	\begin{proof}
		Let $\psi\in Symp_a(\mathbb{R} ^2, \omega)$. By (\ref{c31}) we get
		\begin{align*}\hat{\psi}_*\frac{\partial}{\partial
			x}=\psi_*\frac{\partial}{\partial
			x}\in \Gamma((\psi_*\mathcal{F}^x)^{\pi}).\end{align*}
		And by Proposition~\ref{Biprop2} we have the result.
	\end{proof}
	
	{Lifting of $\mathcal{B}_0$.}
	
	We are going to explicit  $((\psi_*\mathcal{F}^x)^{\pi}, (\psi_*\mathcal{F}^y)^{\pi})$
	for some $\psi\in Symp_a(\mathbb{R} ^2, \omega)$.
	
	Let $\psi\in Symp_a(\mathbb{R} ^2, \omega)$,  by  Proposition~\ref{27c}
	we get
	\begin{equation*}\hat{\psi}_*((\mathcal{F}^x)^{\pi})\subseteq
	(\psi_*\mathcal{F}^x)^{\pi}. \end{equation*}Thus, by Proposition~\ref{Biprop6} we obtain
	\begin{equation*}((\psi_*\mathcal{F}^x)^{\pi},(\psi_*\mathcal{F}^y)^{\pi})=\hat{\psi}_*((\mathcal{F}^x)^{\pi}, (\mathcal{F}^y)^{\pi}),\end{equation*}
	and  Proposition~\ref{affi sympl} implies that \begin{equation*}
	\hat{\psi}_*=\left( \begin{array}{ll}\smallskip A & 0\\
	0 & A^{-1}\end{array}\right);\label{c15}\end{equation*}
	where \begin{equation*}
	A=\left( \begin{array}{lll}\smallskip \alpha & \beta\\
	\gamma & \delta \end{array}\right).\end{equation*}Therefore
	\begin{equation*}\begin{cases}\Gamma((\psi_*\mathcal{F}^y)^{\pi})=\left<\hat{\psi}_*\frac{\partial}{\partial x},\hat{\psi}_*\frac{\partial}{\partial t}\right>\vspace{0.25cm}\\
	\Gamma((\psi_*\mathcal{F}^x)^{\pi})=\left<\hat{\psi}_*\frac{\partial}{\partial
		y},\hat{\psi}_*\frac{\partial}{\partial s}\right>.
	\end{cases}\end{equation*}

	\subsection{A bi-Lagrangian structure on $(\mathbb{R}^2,\omega=hdy\wedge dx)$  }
	In this part, we present  $(\mathcal{P},\mathcal{F}^y)$ the bi-Lagrangian structure on $(\mathbb{R}^2,\omega)$ constituted of parabolas and vertical lines, and calculate its  Hess connection.
	
	\subsubsection{Description de $(\mathcal{P},\mathcal{F}^y)$}
	The foliation $\mathcal{P}$ is described as follows:
	$$\mathcal{P}=\left\{\mathcal{P}_{(a,b)}:y=x^2+b-a^2\right\}_{(a,b)\in\mathbb{R}^2}. $$
	Thus,
	\begin{equation*}
	\begin{cases}
	\Gamma(\mathcal{P})=\left<\frac{\partial}{\partial
		x}+2x\frac{\partial}{\partial y}\right>,\vspace{0.25cm}\\
	\Gamma(\mathcal{F}^x)=\left<\frac{\partial}{\partial y}\right>.
	\end{cases}
	\end{equation*}
	Let us write
	\begin{equation*}
	\begin{cases}
	U=\frac{\partial}{\partial x}+2x\frac{\partial}{\partial y},\\
	V=\frac{\partial}{\partial y}.
	\end{cases}
	\end{equation*}

	By Remark~\ref{Birem2},	$(\mathcal{P},\mathcal{F}^y)$ is a bi-Lagrangian structure on $(\mathbb{R}^2,\omega)$. It can be represented as follows.
	
	%
	
	\begin{figure}[htp]	
		\centering
		\setlength{\unitlength}{9cm}
		\begin{picture}(1, 1)
		\multiput(0,0)(0.04,0){26}
		{\line(0,1){1}}

		\multiput(0,-0.75)(0,0.05){16}
		{\qbezier(-0.01,1)(0.5,0.50)(1.01,1)}
		\end{picture}
		\caption{ The bi-Lagrangian structure 	$(\mathcal{P},\mathcal{F}^y)$}  
		\label{a map 2}    
	\end{figure}
	
	\subsubsection{The Hess connection  of $(\mathcal{P},\mathcal{F}^y)$}
	
	We are going to determine
	\begin{equation*}
	\nabla_{(U,0)}{(U,0)},\;\nabla_{(0,V)}{(0,V)},\;\nabla_{(U,0)}{(0,V)}\mbox{ and }\nabla_{(0,V)}{(U,0)}.
	\end{equation*}
	By (\ref{17c}) it is enough to calculate
	\begin{equation*}
	D(U,U),\; D(V,V),\; D(U,0),\; D(0,V).
	\end{equation*}
	Let us write $x^1=x$ and $x^2=y$.
	
	Let  $ X,Y,Z\in\mathfrak{X}(\mathbb{R}^2)$. From
	(\ref{18c}), we get
	\begin{equation*}
	\omega(D(X,Y),Z)=X\omega(Y,Z)-\omega(Y,[X,Z]).
	\end{equation*}
	Then
	\begin{align*}
	\omega(D(X,Y),Z)&=X[h(dx^2(Y)dx^1(Z)-dx^2(Z)dx^1(Y))]\\
	& -h(dx^2(Y)dx^1([X,Z])-dx^2([X,Z])dx^1(Y)).
	\end{align*}
	Thus, on the one hand,
	\begin{equation*}
	\omega\left(D(U,U),\frac{\partial}{\partial
		x^j}\right)=U[h(2x\delta_{1j}-\delta_{2j})]-2h\delta_{1j}.
	\end{equation*}
	On the other hand,
	\begin{equation*}
	\omega\left(D(U,U),\frac{\partial}{\partial
		x^j}\right)=h[\delta_{1j}dx^2(D(U,U)-\delta_{2j}dx^1(D(U,U)].
	\end{equation*}
	Then\begin{equation*}
	\begin{cases}
	hdx^1(D(U,U))=U(h),	\\
	hdx^2(D(U,U))=U(2xh)-2h.\end{cases}
	\end{equation*}
	Therefore,
	\begin{equation*}
	D(U,U)=\frac{U(h)}{h}U.
	\end{equation*}
	In the same way as before,
	\begin{equation*}
	D(V,V)=\frac{V(h)}{h}V.
	\end{equation*}
	Moreover, since  $[\frac{\partial}{\partial x^i},\frac{\partial}{\partial x^j}]=0$, then
	\begin{eqnarray*}
		[U,V]=\left[\frac{\partial}{\partial
			x^1}+2x^1\frac{\partial}{\partial x^2}, \frac{\partial}{\partial x^2} \right]=0.
	\end{eqnarray*}
	Thus,
	\begin{equation*}
	\begin{cases}
	\nabla_{(U,0)}{(U,0)}=\left(\frac{U(h)}{h},0\right),\vspace{0.25cm}\\
	\nabla_{(0,V)}{(0,V)}=\left(0,\frac{V(h)}{h}\right),\vspace{0.25cm}\\
	\nabla_{(U,0)}{(0,V)}=\nabla_{(0,V)}{(U,0)}=(0,0).
	\end{cases}
	\end{equation*}
	Therefore   \begin{equation}
	\begin{cases}   \Gamma_{11}^{1}=\frac{U(h)}{h},\\
	\Gamma_{22}^{2}=\frac{V(h)}{h},\\
	\Gamma_{22}^{1}=\Gamma_{12}^{1}=\Gamma_{21}^{1}=0,\\
	\Gamma_{12}^{2}=\Gamma_{21}^{2}=\Gamma_{11}^{2}=0.\end{cases}\label{20c}
	\end{equation}
	\subsubsection{The curvature tensor of  $\nabla$}
	Note that
	\begin{equation*} R(U_i,U_j,)U_k=R_{ijk}^lU_l, \; i,j,k\in[2]
	\end{equation*}
	where $U_1=U$, $U_2=V$ and
	\begin{equation*}
	R_{ijk}^l
	=U_i(\Gamma_{jk}^{l})-U_j(\Gamma_{ik}^{l})+\Gamma_{jk}^{s}\Gamma_{is}^{l}-\Gamma_{ik}^{s}\Gamma_{js}^{l}.
	\end{equation*}
	Thus by  (\ref{20c}),  we get
	\begin{equation}
	\begin{cases}
	R_{211}^1=-R_{121}^1=V(\Gamma_{11}^{1}),\\
	R_{122}^2=-R_{212}^2=U(\Gamma_{22}^{2}),\\
	\mbox{the other coefficients are zero}.
	\end{cases}\label{21cl}
	\end{equation}
	\begin{remark}	By combining Theorem~\ref{c10} and system (\ref{21cl}), $(\omega, \mathcal{P},\mathcal{F}^y)$ is an affine bi-Lagrangian structure on $\mathbb{R}^2$ when $V(\Gamma_{11}^{1})=U(\Gamma_{22}^{2})=0$; in particular, when $h$ is a constant  map.
	\end{remark}

	

\begin{thebibliography}{99}
		\bibitem{2} {M. N. Boyom,} vari\'{e}t\'{e}s symplectiques affine. Manuscripta Math. 64 (1989), no. 1, 1-33.
		\bibitem{11}{M. N. Boyom,} M\'{e}triques K\"{a}hl\'{e}riennes affinement plates de certaines vari\'{e}t\'{e}s symplectiques. Proc. London. Math. Soc.(3)
		66 (1993), 338-380.
		\bibitem{3}{M. N. Boyom,} Structures localement plates de
		certaines vari\'{e}t\'{e}s symplectiques. Math. Scand. 76 (1995), no. 1, 61-84
		76(1995). 61-84.
		\bibitem{GB1}{M. N. Boyom,} The Cohomology of Koszul-Vinberg Algebras. Pacific Journal of Mathematics (1)
		225 (2006) 119-152.
		\bibitem{dasilva}{ A. C. da Silva,} Lectures on Symplectic Geometry, 1764, 2006.
		\bibitem{FR1}{F. Etayo and R. Santamaria,} The canonical connection of a bi-Lagrangian manifold.
		J. Phys. A. Math. Gen., 34 (2001) 981-987.
		\bibitem{FR2}{F. Etayo and R. Santamaria,} Bi-Lagrangian manifolds and associated geometric structures.
		Proceed. X Fall Workshop on Geometry and Physics, Miraflores de la Sierra (Madrid) 4 (2003) 117-126.
		\bibitem{1}{F. Etayo, R. Santamaria and U.
			R. Tr\'{\i}as,}  The geometry of a bi-Lagrangian manifold. Differential Geometry and its Applications, 24 (2006) 33-59.
		\bibitem{7}{H. Hess,} Connections on symplectic manifolds and geometric
		quantization. Lecture notes in Mathematics 836 (1980) 153-166.
		\bibitem{law}{B. Lawson,} Foliations, Bull. Am. Math. Soc. 80 (1974) 369-418.
		\bibitem{paul}{P. Libermann and C. M. Marle,} Symplectic Geometry and Analytical Mechanics, 35, 1987.
		\bibitem{FE}{ B. Loustau and A. Sanders.} Bi-Lagrangian structures and Teichm\"{u}ller theory, HAL-01579284v2, 2017.
		\bibitem{EF}{E. Miranda and F. Presas,} Geometric Quantization of Real Polarization via Sheaves. J. Symplectic Geom. 13 (2015), no.2, 421-462.	
		\bibitem{vai}{I. Vaisman,}   Hessian Geometry on Lagrange Spaces. Int. J. Math. Math. Sci. 2014 (2013). 1-10.
		\bibitem{Wei}{A. Weinstein,} Symplectic manifolds and their Lagrangian submanifolds. Advances in Math., 6 (1971) 329-346.
		
		
		
		
		
		
		
		
		
	\end{thebibliography}
\end{document}